\setlist[enumerate]{leftmargin=.5in}
\setlist[itemize]{leftmargin=.5in}
\crefname{hypothesis}{Hypothesis}{Hypotheses}
\title{Area formula for spherical polygons via prequantization 
}
\author{
Albert Chern\thanks{University of California San Diego
  (\email{alchern@ucsd.edu}).}
\and Sadashige Ishida\thanks{Institute of Science and Technology Austria 
  (\email{sadashige.ishida@ist.ac.at}).}
\funding{This project was funded in part by the European Research Council (ERC Consolidator Grant 101045083 \emph{CoDiNA}) and the National Science Foundation CAREER Award 2239062. Some figures in the article were generated by the software Houdini and its education license was provided by SideFX.}
}
\theoremstyle{remark}
\numberwithin{equation}{section}
\newcommand{\abs}[1]{\lvert#1\rvert}
\newcommand{\stkouteq}[1]{\ifmmode\text{\sout{\ensuremath{#1}}}\else\sout{#1}\fi}
\newcommand{\sttext}[1]{\setstcolor{red}\st{#1}} 
\newcommand{\figref}[1]{Figure~\ref{#1}}
\newcommand{\secref}[1]{Section~\ref{#1}}
\newcommand{\thmref}[1]{Theorem~\ref{#1}}
\newcommand{\lemref}[1]{Lemma~\ref{#1}}
\newcommand{\propref}[1]{Proposition~\ref{#1}}
\newcommand{\corref}[1]{Corollary~\ref{#1}}
\def\cf{\emph{cf.}}
\def\ie{\emph{i.e.}}
\def\CC{\mathbb{C}}
\def\DD{\mathbb{D}}
\def\HH{\mathbb{H}}
\def\PP{\mathbb{P}}
\def\RR{\mathbb{R}}
\def\SS{\mathbb{S}}
\def\ZZ{\mathbb{Z}}
\def\bx{\mathbf{x}}
\DeclareSymbolFont{bbold}{U}{bbold}{m}{n}
\DeclareSymbolFontAlphabet{\mathbbold}{bbold}
\newcommand{\ii}{\mkern1.5mu\mathbbold{i}\mkern1.5mu}
\newcommand{\jj}{\mkern1.5mu\mathbbold{j}\mkern1.5mu}
\newcommand{\kk}{\mkern1.5mu\mathbbold{k}\mkern1.5mu} 
\newcommand{\overbar}[1]{\mkern 1.5mu\overline{\mkern-1.5mu#1\mkern-1.5mu}\mkern 1.5mu}
\newcommand*{\conj}[1]{\overbar{#1}}
\renewcommand{\Im}{\operatorname{Im}}
\renewcommand{\Re}{\operatorname{Re}}
\def\det{\operatorname{det}}
\DeclareMathOperator{\SO}{SO}
\DeclareMathOperator{\Area}{Area}
\DeclareMathOperator{\Dihedral}{Dihedral}
\begin{document}

\maketitle

\begin{abstract}
    We present a formula for the signed area of a spherical polygon via prequantization.  In contrast to the traditional formula based on the Gauss--Bonnet theorem that requires measuring angles, the new formula mimics  Green's theorem and is applicable to a wider range of degenerate spherical curves and polygons.
\end{abstract}

\begin{keywords}
Spherical polygon, prequantization
\end{keywords}

\begin{MSCcodes}
51M25, 53D50, 26B15
\end{MSCcodes}

\section{Introduction}
\label{sec:Introduction}
A spherical polygon is a finite number of ordered points on \(\SS^2\) connected by geodesics. Computing the solid angle of the region enclosed by a spherical polygon is important in many subjects, which we briefly summarize at the end of this section.

For a given polygon \(\Gamma=(p_0, \ldots,p_{n-1})\), its surface area is often computed using the formula 
\begin{align}\label{eq:GB_formula}
\operatorname{Area}(\Gamma)=2\pi - \sum_i \vartheta_i, 
\end{align}
derived from the Gauss--Bonnet theorem.
This area formula involves the evaluation of the exterior angle \(\vartheta_i\) at each vertex 
\begin{align}\label{eq:exterior_angle1}
    \vartheta_i=\operatorname{sign}\left(\det (p_{i-1},p_i,p_{i+1}) \right)\arccos\left(\frac{p_{i-1}\times p_{i}}{\abs{p_{i-1}\times p_{i}}}\cdot\frac{p_i\times p_{i+1}}{|p_i\times p_{i+1}|}\right),\  
\end{align}
which is a function of three points.
However, this formula requires non-degeneracy assumptions about these points that render it unavailable or numerically unstable in certain situations. 
For example, two consecutive points of the polygon cannot lie on the same location as the exterior angle is undefined. 
We may consider removing such points from the polygon, but judging precisely whether two points are at the same location or just nearby locations is not always possible in numerical computation, especially when the points are obtained after some computational operations. 

On the other hand, an area formula for polygons in \(\RR^2\) does not have such limitations. 
For a polygon \(\Gamma_{\RR^2}=\left((x_0,y_0).\ldots,(x_{n-1},y_{n-1})\right)\), the area is 
\begin{equation}\label{eq:planerAreaFormula}
    %
    \operatorname{Area}(\Gamma_{\RR^2})=\sum_i \frac{1}{2}(x_{i+1}y_i-x_i y_{i+1}).
\end{equation}
\begin{wrapfigure}[12]{r}{120pt}
\centering
\includegraphics[width=120pt]{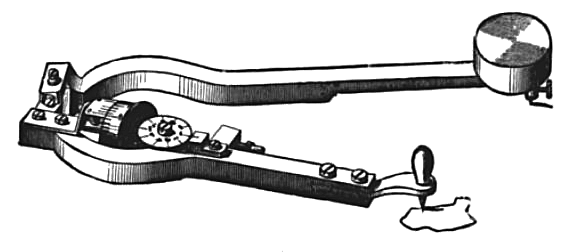}
\caption{A planimeter measures the area of a planar region by tracing and accumulating quantities along its perimeter.}
\label{fig:Planimeter}
\end{wrapfigure}
Unlike the classical formula for spherical polygons (\ref{eq:GB_formula}, \ref{eq:exterior_angle1}),
this planar area formula involves adding numerically stable edge quantities.
This formula can be derived by converting the area integral over the region into a line integral along the polygon curve using Green's formula. 
This is the same technique that enables 
\emph{planimeters} (\autoref{fig:Planimeter}).
The key that allows Green's formula is that the area form of \(\RR^2\) is an exact differential form.
Unfortunately, this strategy does not work directly for spherical polygons as the area form of \(\SS^2\) is not exact.

We circumvent this issue in this work via so-called \emph{prequantization}, which is a preliminary setup to transform a classical mechanical system into a quantum mechanical system \cite{bates_Weinstein1997lectures}. 
We lift the area form of the sphere onto a space where the resulting 2-form is exact. More precisely, we utilize a prequantum bundle, 
which is a principal circle bundle where the lifted 2-form is exact.
This gives rise to a version of Green's theorem that translates the area integral on the base manifold into a line integral along a lifted perimeter in the bundle.

By choosing a specific prequantum bundle over \(\SS^2\), we can obtain an explicit expression of the line integral. We use the Hopf fibration \(\pi\colon \SS^3 \to \SS^2\) with a specific connection 1-form and derive a formula for the  area of a spherical polygon. 
Unlike the classical formula \eqref{eq:GB_formula}, the new formula does not involve numerically unstable evaluation of angles of three consecutive vertices.  Instead, it only involves a sum of edge-wise measurement resembling \eqref{eq:planerAreaFormula}.  
We also recover the classical formula \eqref{eq:GB_formula} by choosing \(\SO(3)\) as the prequantum bundle with a specific lift of the polygon.
Finally, we present numerical examples comparing our formula and the classical formula. 

\paragraph{Relation to quantum mechanics}
Our formula for the spherical area via prequantization is similar to evaluating the \emph{Berry phase} for a quantum mechanical system.  In this analogy, the points on \(\SS^3\) (or any prequantum circle bundle) represent the possible quantum states of a spinor (or 2 qubits), while their projections on \(\SS^2\) are their classical representations on the \emph{Bloch sphere} \cite{bengtsson_zyczkowski_2006}.

\paragraph{Applications and computations of solid angles}

Computing the area, or solid angle, of a spherical region occurs in multiple disciplines.
They are either analytically computed by the angle-based formula (\ref{eq:GB_formula}, \ref{eq:exterior_angle1}) \cite{Bevis1987ComputingTA}, or approximated by 
point counting \cite{feng2006area,Gonzalez2010Fibonacci} and Monte-Carlo methods \cite{Arvo1995stochasticSampling}.
A standard routine in geological survey is to find the area of an irregularly shaped region on the nearly-spherical earth   \cite{Bevis1987ComputingTA,Gonzalez2010Fibonacci}.
In quantum mechanics, the solid angle of a spherical curve generated by a Brownian motion of a particle describes the phase of two-state quantum systems undergoing random evolution \cite{shapere1989geometric,Krishna_2000}. The so-called Majorana representation of polarized light also involves the solid angle of certain spherical quadrilaterals \cite{Hannay1998Majorana}.

Solid angles give rise to another geometric concept, \emph{the solid angle field of a space curve}.  Given a space curve in \(\RR^3\), the solid angle field is an $\SS^1$-valued function over $\RR^3$ whose value at each point $\bx\in\RR^3$ is half of the solid angle subtended by the closed curve at \(\bx\).
In contact geometry, a so-called \emph{open-book decomposition} can be constructed by the solid angle field, with the level sets of the \(\SS^1\)-valued solid angle field being the \emph{pages} and the space curve being the \emph{binder} \cite{Geiges_2008contact}.
Each page can also serve as a \emph{Seifert surface}, an oriented surface bordered by a given knot \cite{dangskul2016construction,Binysh_2018}.
Solid angle fields also play important roles in fluid dynamics and electrodynamics, as the gradients of the solid angle fields are the \emph{Biot-Savart fields} of space curves \cite[pp.118]{sommerfeld2013electrodynamics}, which represent the magnetic fields induced by electric currents, and the velocity fields corresponding to given vortex filaments.
In addition, solid angle fields are applied to constructing implicit representations for space curves for simulating vortex dynamics \cite{iwc2022implicit_filaments}, visualization of nematic dislinations in electromagnetism \cite{Binysh_2018}, and radiosity illumination in rendering
\cite{Ramamoorthi2022}.

\section{Area of a spherical polygon}
We begin with the problem of seeking a line integral formula for the areas of spherical polygons.
Next, we introduce the notion of prequantum bundles.  While we only use the essential properties of this notion in a self-contained manner, the readers may find backgrounds in principal bundles \cite{nakahara2003geometry,kobayashiNomizu1963foundations} and geometric quantization \cite{bates_Weinstein1997lectures,kostantBertram1970quantization}useful.
Finally, we derive a line integral formula for spherical areas via prequantization and apply it to spherical polygons.

  An oriented spherical polygon \(\Gamma\) is a finite cyclic ordered list of spherical points \(\Gamma=(p_0,\ldots,p_{n-1})\), \(p_i \in \SS^2\), \(i\in\ZZ_n=\ZZ/(n\ZZ)\).
 Each edge, \ie\@ each pair of adjacent points 
  \((p_i, p_{i+1})\), \(i\in\ZZ_n\) (including \((p_{n-1},p_0)\) using the modulo arithmetic of \(i\in\ZZ_n\)), is joined by the shortest connecting path on \(\SS^2\).  
  This edge path is a constant point when \(p_i=p_{i+1}\), and is otherwise a part of the great circle containing \(p_i,p_{i+1}\).   
  To ensure uniqueness of the shortest edge path, we assume that \(p_{i+1}\neq -p_i\) for each \(i\in\ZZ_n\), which does not take away generality as one may add intermediate points if the polygon contains any antipodal edge.

By concatenating all the edge paths, the spherical polygon \(\Gamma\) is naturally associated with a continuous closed path \(C_\Gamma\colon\SS^1\to\SS^2\).  
Let \(S_\Gamma\colon\DD^2\to\SS^2\) be any smooth extension%
\footnote{A smooth extension $S_\Gamma\colon\DD^2\to\SS^2$ exists for any smoothly parameterized path $C_{\Gamma}\colon\SS^1\to\SS^2$.  A smooth path $C_{\Gamma}$ can be constructed by concatenating smooth parameterizations of the great circular edges with vanishing derivatives \(0=\partial_tC_\Gamma = \partial_t^2C_\Gamma= \cdots\) at the vertices.}
of \(C_{\Gamma}\colon\SS^1=\partial\DD^2\to\SS^2\) to the unit disk \(\DD^2\).  We call \(S_\Gamma\) the \emph{enclosed region} of the spherical polygon \(\Gamma\), which is unique up to an integer number of full wrappings around the entire sphere.  
Define the \emph{signed area} of \(\Gamma\) as the total signed area of \(S_\Gamma\), which is well-defined modulo \(4\pi\):
\begin{align}
\label{eq:AreaDefinition}
    \operatorname{Area}(\Gamma) 
    \coloneqq 
    \iint_{S_\Gamma} \sigma = \iint_{\DD^2}S_\Gamma^*\sigma\in \RR/(4\pi\ZZ).
\end{align}

Here \(S_\Gamma^*\) denotes the pullback via \(S_\Gamma\), and \(\sigma\in\Omega^2(\SS^2)\) is the standard area form of the unit sphere \(\SS^2\subset\RR^3\) induced by the Euclidean metric. The area form \(\sigma\) can be explicitly written as \(\sigma= \sin \theta d\theta\wedge d\phi\) using a spherical coordinate chart, or as \(\sigma=(x dy \wedge dz + y dz \wedge dx + z dx \wedge dy)/(x^2+y^2+z^2)^{3/2} \ |_{\SS^2}\) using the Cartesian coordinates in \(\RR^3\). 
This definition is valid for self-intersecting polygons (See \autoref{fig:signed_area}) and degenerate polygons which may contain edges of zero lengths or consecutive edges that fold back onto each other. 

\begin{figure}[h]
\centering
 \begin{minipage}[t]{0.4\textwidth}
\includegraphics[width=\textwidth]{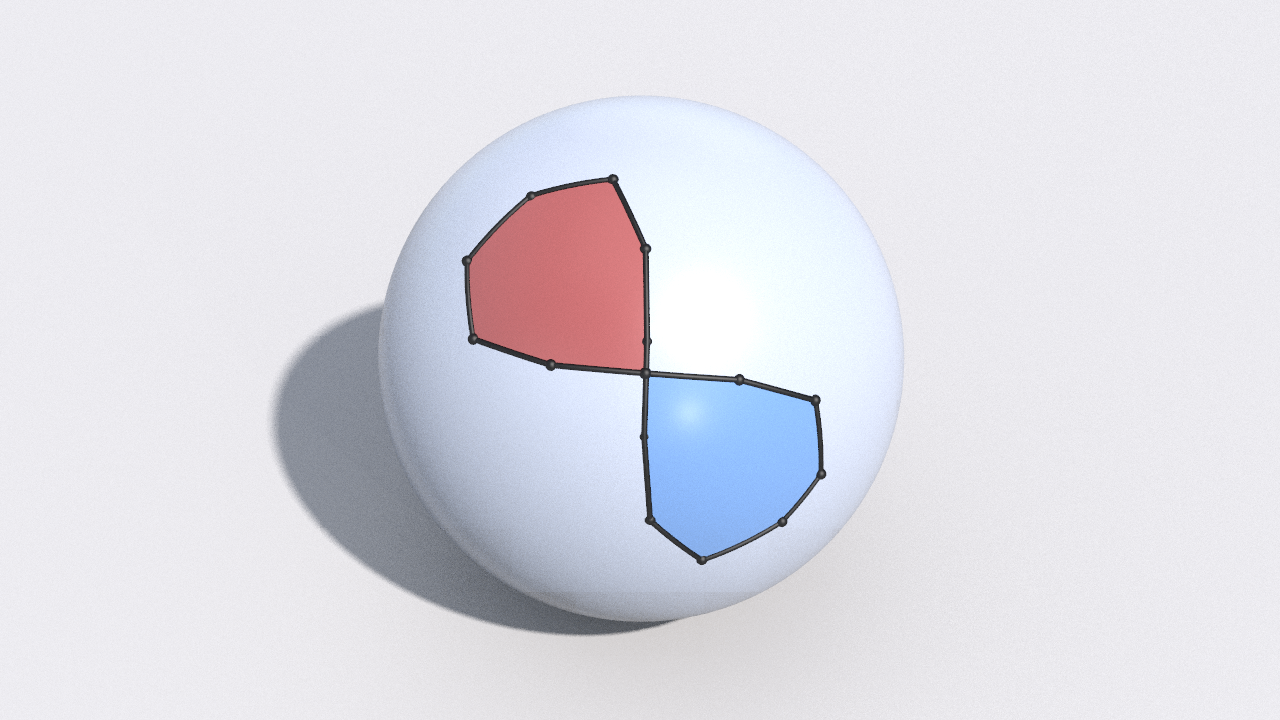}
\end{minipage}
 \begin{minipage}[t]{0.4\textwidth}
\includegraphics[width=\textwidth,angle=0]{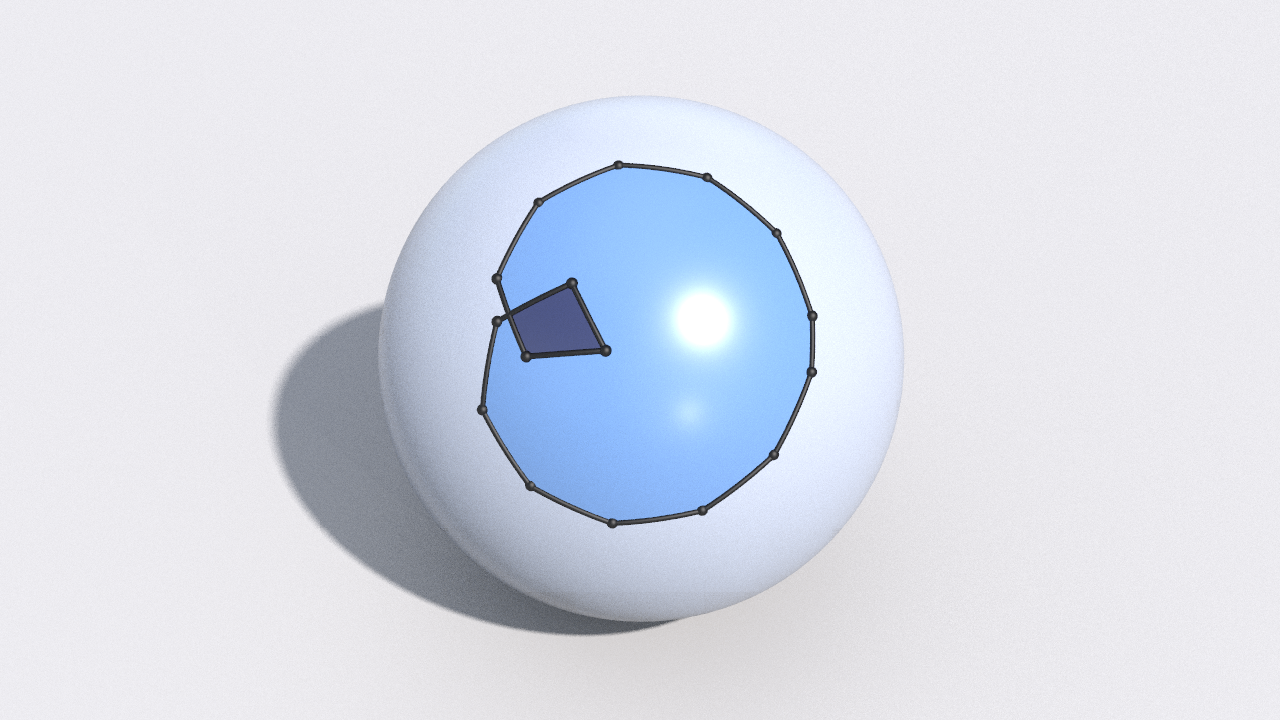}
\end{minipage}
\caption{Examples of spherical polygons. Left: the two regions with different colors contribute positively or negatively to the signed area. Right: the darker region contributes to the signed area twice.}
\label{fig:signed_area}
\end{figure}

\subsection{Areas as line integrals}
Our goal is to find a numerically robust formula for \eqref{eq:AreaDefinition} in terms of the vertex positions \(\Gamma = (p_0,\ldots,p_{n-1})\).
More precisely, an ideal area formula we look for is a line integral ``\(\Area(\Gamma)=\oint_{C_{\Gamma}}\alpha\)'' for some smooth differential 1-form \(\alpha\), analogous to Green's Theorem in the plane.
The reason for this desire goes as follows.
Once \eqref{eq:AreaDefinition} becomes a line integral along the polygonal curve, we can derive the formula for \(\Area(\Gamma)\) by summing the explicit integrals of \(\alpha\) along the great circular arc of each edge.
Such a line-integral-based formula would be applicable to degenerate cases: The angles between consecutive edges would never appear in the formula; the line integral of a smooth 1-form along an edge shrinks to zero gracefully if the edge length shrinks to zero.

Such a line integral formula ``\(\Area(\Gamma) = \oint_{C_\Gamma}\alpha\)'' appears to rely on the exactness of \(\sigma\), which is the existence of a smooth 1-form \(\alpha\) so that \(d\alpha = \sigma\). 
If \(\alpha\) exists, then by Stokes' Theorem \(\Area(\Gamma) = \iint_{S_\Gamma}\sigma = \iint_{S_{\Gamma}}d\alpha = \oint_{C_{\Gamma}}\alpha\).
However, the \emph{spherical area form
\(\sigma\) is not exact.}
Fortunately, and perhaps surprisingly, a line integral formula does not require the exactness of \(\sigma\) as described below.

\begin{definition}[Prequantum bundle]
\label{def:PrequantumBundle}
Let \(\beta\in\Omega^2(\Sigma)\) be a closed 2-form on a manifold \(\Sigma\).  A \emph{prequantum bundle} over \((\Sigma,\beta)\) is a principal circle bundle \(\pi\colon Q\to\Sigma\) equipped with an equivariant 1-form \(\alpha\in\Omega^1(Q)\) with $d\alpha = \pi^*\beta$.
\end{definition}
Intuitively, a principal circle bundle \(Q\) over the base manifold $\Sigma$ is a \((\dim(\Sigma)+1)\)-dimensional space where a circle is attached to each point of \(\Sigma\). The equivariance of \(\alpha\in\Omega^1(Q)\) means that \(\alpha\) is invariant under a uniform rotation in the circle dimension.

\begin{remark}\label{rmk:existence_prequantum_bundle}
In a classical definition for a prequantum bundle, \(\alpha\) is a \emph{connection 1-form}, which has an additional requirement that \(\alpha(V) = 1\) where \(V\) is the generator of the rotation action.
In general, every compact symplectic manifold \((\Sigma,\beta)\) admits a principal circle bundle \((Q,\alpha)\) with a connection 1-form $\alpha$ satisfying \(d\alpha=\pi^* \beta\) upon a rescaling of \(\beta\) so that \(\beta/2\pi\in H_2(\Sigma,\ZZ)\) 
\cite[Theorem~3]{Boothby_Wang_contact} \cite[Proposition~9]{Kobayashi_Kaeler}.
\end{remark}

\begin{proposition}[Lifted Green's theorem]\label{prop:liftedGreenTheorem}
    Let  \(\pi\colon (Q,\alpha)\to(\Sigma,\beta)\) be a prequantum bundle.   
    For each surface \(S\colon\DD^2\to\Sigma\) consider an arbitrary lift \(\widetilde S\colon\DD^2\to Q\), \(\pi\circ\widetilde S = S\).  Then 
    \begin{align*}
        \iint_{ S}\beta = \oint_{\partial \widetilde{ S}}\alpha.
    \end{align*}
\end{proposition}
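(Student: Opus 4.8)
The plan is to transport the whole identity down to the disk \(\DD^2\) and then invoke Stokes' theorem, so that the only structural property we actually use is the defining relation \(d\alpha = \pi^*\beta\); neither the scaling constant \(c\) nor the fact that \(c\alpha\) is a connection form enters the argument. First I would rewrite the left-hand side as a pullback integral over the disk, \(\iint_S\beta = \iint_{\DD^2}S^*\beta\), which is simply the definition of integrating a 2-form over the parametrized surface \(S\).

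The key step is to factor this pullback through the lift. Since \(\pi\circ\widetilde S = S\), functoriality of pullback gives \(S^*\beta = \widetilde S^*\pi^*\beta\), and the relation \(d\alpha=\pi^*\beta\) together with the commutation of the exterior derivative with pullback turns \(S^*\beta\) into an exact form on the disk:
\begin{align*}
S^*\beta = \widetilde S^*\pi^*\beta = \widetilde S^*(d\alpha) = d\bigl(\widetilde S^*\alpha\bigr).
\end{align*}
Applying Stokes' theorem to the 1-form \(\widetilde S^*\alpha\) on \(\DD^2\) then yields \(\iint_{\DD^2}d(\widetilde S^*\alpha) = \oint_{\partial\DD^2}\widetilde S^*\alpha\), and the right-hand side is by definition \(\oint_{\partial\widetilde S}\alpha\), the line integral of \(\alpha\) along the boundary curve \(\widetilde S|_{\partial\DD^2}\) of the lifted surface. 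Chaining these equalities gives the claim.

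Two points deserve care rather than genuine difficulty. First, the statement presupposes a lift \(\widetilde S\); such a lift exists and may be taken smooth because \(\DD^2\) is contractible, so the pullback bundle \(S^*Q\to\DD^2\) is trivial and admits a global section. The asserted independence of the final value on the choice of lift is then automatic, since the left-hand side \(\iint_S\beta\) never mentions \(\widetilde S\). Second, I must fix orientation conventions so that the boundary orientation induced on \(\partial\DD^2\) matches the one used to define \(\oint_{\partial\widetilde S}\); this is exactly the bookkeeping that makes Stokes' theorem apply with the stated signs. The only real regularity hypothesis is that \(S\), hence \(\widetilde S\), is smooth enough—say piecewise \(C^1\)—for Stokes' theorem on the disk, which the later piecewise-geodesic polygons will satisfy. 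I do not expect a substantive obstacle: the content is entirely \emph{pull back and apply Stokes}, with the prequantum structure contributing only the exactness of \(\pi^*\beta\) upstairs.
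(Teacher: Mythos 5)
Your proof is correct and is essentially the paper's proof: the paper's one-line chain \(\oint_{\partial\widetilde S}\alpha = \iint_{\widetilde S}d\alpha = \iint_{\widetilde S}\pi^*\beta = \iint_{S}\beta\) is exactly your argument, with your explicit pullbacks to \(\DD^2\) being the definition of these parametrized-surface integrals. Your added remarks on lift existence (contractibility of \(\DD^2\)) and orientation are sound bookkeeping but do not change the route.
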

\begin{proof}
    \(\oint_{\partial \widetilde{ S}}\alpha = \iint_{ \widetilde{ S}}d\alpha =  \iint_{ \widetilde{ S}}\pi^*\beta  =  \iint_{\pi\circ \widetilde S}\beta =  \iint_{ S}\beta\).
\end{proof}

The lifted Green's theorem enables line integral formula for the area of a spherical curve. Suppose we have a prequantum bundle \(\pi\colon (Q,\alpha)\to(\SS^2,\sigma)\) over the sphere \(\SS^2\).  
For each closed curve \(C_{\Gamma}\) on \(\SS^2\), construct an arbitrary lift \(\widetilde C_{\Gamma}\colon\SS^1\to Q\), \(\pi\circ \widetilde C_{\Gamma} = C_{\Gamma}\).  Then
\begin{align}\label{eq:area_via_prequantum_bundle}
    \Area(\Gamma) = \oint_{\widetilde C_\Gamma}\alpha\mod 4\pi.
\end{align}

\begin{remark}
While all the examples we provide in this article are prequantum bundles,  
for \propref{prop:liftedGreenTheorem}
we only need any smooth map \(\pi\colon Q\to\Sigma\) and \(\alpha\in\Omega^1(Q)\) that satisfies \(d\alpha = \pi^*\beta\) and has a general liftability of a topological disk \(S\) to \(\widetilde S\).  In particular, we do not need \(\pi\) to be a circle bundle or \(\alpha\) to be equivariant.
\end{remark}

\subsection{The Hopf fibration}
\label{sec:HopfFibration}
The Hopf fibration \(\pi\colon\SS^3\to\SS^2\) is a prequntum bundle over \(\SS^2\).
We provide its explicit expressions in the quaternion coordinate \(\HH = \{x\ii +y\jj + z\kk + w\,\vert\, (x,y,z,w)\in\RR^4\}\).
Using the quaternion coordiantes \(q\colon \SS^3\hookrightarrow\HH\cong\RR^4\) and \(p\colon \SS^2\hookrightarrow\Im\HH\cong\RR^3\), 
Hopf's bundle projection is given by
\begin{align}
\label{eq:HopfMap}
    \pi\colon \SS^3\to\SS^2,\quad \pi(q)\coloneqq q\ii \conj q.
\end{align}
Note that \(\pi\) is a principal circle bundle with  action \(\lhd\colon\SS^1\times\SS^3\to\SS^3, (\lhd e^{-\ii\theta})q \coloneqq qe^{-\ii\theta}
\)
For \(q\) and \(q'\coloneqq q e^{-\ii\theta}\) on a same fiber, we write \(\arg(\conj{q'}q)=\theta\).

The 1-form \(\alpha\in\Omega^1(\SS^3)\) for the prequantization is expressed as
\begin{align}
\label{eq:Hopf1Form}
\alpha = 2\Re(\ii \conj q dq)=-2\Re(d\conj q q\ii) = 2\Re( dq\ii \conj q) = -2\Re(q\ii d\conj q).
\end{align}
The 1-form \(\alpha/2\) is a connection form as it is equivariant under \(\SS^1\) actions: \((\lhd e^{\ii\theta})^*\alpha = \alpha\) and \({1\over 2}\alpha({d\over d\theta}\vert_{\theta=0}(\lhd e^{-\ii\theta})q)=1\). 
\begin{proposition}
\label{prop:HopfPrequantum}
    The 1-form \(\alpha\in\Omega^1(\SS^3)\) defined in \eqref{eq:Hopf1Form} and the map \(\pi\colon\SS^3\to\SS^2\) defined in \eqref{eq:HopfMap} satisfy \(d\alpha = \pi^*\sigma\), where \(\sigma\in\Omega^2(\SS^2)\) is the standard area form on the unit sphere.  That is, \((\SS^3,\alpha)\) is a prequantum bundle over \((\SS^2,\sigma)\). 
\end{proposition}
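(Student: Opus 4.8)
The plan is to reduce both sides of \(d\alpha = \pi^*\sigma\) to the same expression in terms of the left-invariant Maurer--Cartan 1-form \(\theta \coloneqq \conj q\,dq\). Differentiating the constraint \(\conj q q = 1\) of \(\SS^3\) gives \(\theta + \conj\theta = d(\conj q q) = 0\), so \(\theta\) is purely imaginary; I write \(\theta = \theta_\ii\ii + \theta_\jj\jj + \theta_\kk\kk\) with real-valued 1-forms \(\theta_\ii,\theta_\jj,\theta_\kk\). A one-line computation with the multiplication table \(\ii\jj=\kk\), etc.\ gives \(2\Re(\ii\theta) = -2\theta_\ii\), so from \eqref{eq:Hopf1Form} we have \(\alpha = -2\theta_\ii\), identifying \(\alpha\) (up to scale) as the vertical/connection part and \(\theta_\jj,\theta_\kk\) as horizontal.

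To compute \(d\alpha\), I would first establish the structural equation \(d\theta = -\theta\wedge\theta\): from \(\conj q q = 1\) one gets \(d\conj q = -\theta\conj q\), whence \(d\theta = d\conj q\wedge dq = -\theta\wedge\theta\). Expanding \(\theta\wedge\theta\) with the quaternion table, its \(\ii\)-component is \(2\,\theta_\jj\wedge\theta_\kk\), so \(d\theta_\ii = -2\,\theta_\jj\wedge\theta_\kk\) and therefore
\begin{align*}
  d\alpha = -2\,d\theta_\ii = 4\,\theta_\jj\wedge\theta_\kk.
\end{align*}

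For the right-hand side, I would first record the quaternionic form of the area form: for \(p\in\SS^2\subset\Im\HH\) and tangent vectors \(u,v\), the scalar triple product gives \(\sigma_p(u,v) = -\Re(puv)\). Next, using \(dq = q\theta\) and \(d\conj q = -\theta\conj q\), the differential of the Hopf map \eqref{eq:HopfMap} becomes \(d\pi = q[\theta,\ii]\conj q\) with \([\theta,\ii] = 2(\theta_\kk\jj - \theta_\jj\kk)\). Since \(\conj q q = 1\), conjugation \(a\mapsto q a\conj q\) is multiplicative, so \(d\pi\wedge d\pi = q\,\bigl([\theta,\ii]\wedge[\theta,\ii]\bigr)\,\conj q\); expanding the commutator wedge yields a real multiple of \(\ii\), namely \([\theta,\ii]\wedge[\theta,\ii] = 8\,(\theta_\jj\wedge\theta_\kk)\,\ii\). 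Substituting into \(\pi^*\sigma = -\tfrac12\Re(\pi\cdot d\pi\wedge d\pi)\) and using \(\pi = q\ii\conj q\) together with \(p^2 = -1\) for unit imaginary \(p\), the two factors of \(q\ii\conj q\) collapse and give \(\pi^*\sigma = 4\,\theta_\jj\wedge\theta_\kk\), matching \(d\alpha\).

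The main obstacle is bookkeeping the overall constant. Because a prequantum bundle requires \(d\alpha = \pi^*\sigma\) exactly (not merely up to a scalar), the factor \(2\) in the definition \eqref{eq:Hopf1Form} is essential, and it must be tracked through three places where factors of two enter: the antisymmetrization implicit in reading a quaternion-valued wedge \(\omega\wedge\omega\) as a \(2\)-form (which is why \(\pi^*\sigma = -\tfrac12\Re(\pi\cdot d\pi\wedge d\pi)\) carries the \(\tfrac12\)), the commutator \([\theta,\ii]\), and the structural equation. The identity \(\Re(puv) = -\Re(pvu)\) for imaginary \(p,u,v\) (which holds since \(\Re p = 0\) and \(uv+vu\in\RR\)) is what guarantees that \(-\Re(puv)\) is genuinely alternating and supplies that \(\tfrac12\); checking it, together with the quaternion multiplication bookkeeping, is the only real work.
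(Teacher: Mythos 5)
Your proof is correct, and it takes a recognizably different route from the paper's. The paper proves \(d\alpha=\pi^*\sigma\) by brute-force expansion: it differentiates \(\pi=q\ii\conj q\) to get \(d\pi = dq\,\ii\conj q + q\ii\, d\conj q\), substitutes into \(\pi^*\sigma = -\tfrac12\Re(\pi\,\Im(d\pi\wedge d\pi))\), and manipulates the resulting four-term expression (using \(d\conj q\, q = -\conj q\, dq\) on \(\SS^3\)) until it visibly equals \(d\alpha = -2\Re(dq\wedge\ii\, d\conj q)\). You instead trivialize everything in the left-invariant coframe: writing \(\theta=\conj q\,dq=\theta_\ii\ii+\theta_\jj\jj+\theta_\kk\kk\), you identify \(\alpha=-2\theta_\ii\), use the Maurer--Cartan structure equation \(d\theta=-\theta\wedge\theta\) to get \(d\alpha = 4\,\theta_\jj\wedge\theta_\kk\), and compute \(d\pi = q[\theta,\ii]\conj q\) to reduce \(\pi^*\sigma\) to the same expression. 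I checked the constants (\([\theta,\ii]\wedge[\theta,\ii]=8\,\theta_\jj\wedge\theta_\kk\,\ii\), \(\Re(\pi^2)=-1\), and your verification that \(-\Re(puv)=\langle p,u\times v\rangle\) agrees with the paper's \(-\tfrac12\Re(p\,\Im(dp\wedge dp))\) after antisymmetrization) and they all come out right. What your route buys: both sides land on a single canonical representative \(4\,\theta_\jj\wedge\theta_\kk\) rather than two expressions that must be matched term by term, the factor bookkeeping is localized in three identifiable places as you note, and as a byproduct the decomposition \(\alpha=-2\theta_\ii\) with horizontal space \(\ker\theta_\ii = \operatorname{Span}(q\jj,q\kk)\) makes the connection structure (which the paper establishes separately just before the proposition, and which Definition~\ref{def:PrequantumBundle} also requires) essentially immediate, since conjugation by \(e^{\ii\theta}\) fixes the \(\ii\)-component of \(\theta\). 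What the paper's route buys is self-containedness: it never introduces the Maurer--Cartan formalism and needs only the product rule and \(|q|^2=1\).
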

This result is known (\cite{bengtsson_zyczkowski_2006}, \cite[Theorem~ 1]{chernSchroedinger}), but we give a proof in \autoref{appendix:proof_of_prequantum_bundles} for completeness. 
With this setting, \propref{prop:liftedGreenTheorem} is now written more concretely for a spherical polygon:
\begin{corollary}
\label{cor:HopfArea}
    The area \(\Area(\Gamma)\) of a spherical polygon \(\Gamma=\{p_i\}_i\) can be evaluated by a line integral
    \begin{align}
        \label{eq:AreaByHopf}
        \Area(\Gamma) = \sum_i\int_{\widetilde C_\Gamma([t_i,t_{i+1}])}\alpha\mod 4\pi
    \end{align}
    where \(\alpha\) is given by \eqref{eq:Hopf1Form} and \(\widetilde C_\Gamma\) is an arbitrary lift \(\widetilde C_\Gamma\colon\SS^1\to\SS^3\) of the polygon curve \(C_\Gamma = \pi\circ\widetilde C_\Gamma \colon\SS^1\to\SS^2\) with \(C_\Gamma(t_i)=p_i\) for each \(i\).
\end{corollary}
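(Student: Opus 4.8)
The plan is to read \eqref{eq:AreaByHopf} as the concrete incarnation of the abstract formula \eqref{eq:area_via_prequantum_bundle} for the specific prequantum bundle furnished by the Hopf fibration, and then to break the single closed line integral into its edge contributions by additivity. First I would invoke \propref{prop:HopfPrequantum}, which certifies that $(\SS^3,\alpha)$ with the projection \eqref{eq:HopfMap} and the $1$-form \eqref{eq:Hopf1Form} is a prequantum bundle over $(\SS^2,\sigma)$, namely $d\alpha=\pi^*\sigma$. This is exactly the hypothesis of \propref{prop:liftedGreenTheorem}: taking $S=S_\Gamma$ the enclosed region from \eqref{eq:AreaDefinition} and any lift $\widetilde S_\Gamma\colon\DD^2\to\SS^3$, the lifted Green's theorem gives $\Area(\Gamma)=\iint_{S_\Gamma}\sigma=\oint_{\partial\widetilde S_\Gamma}\alpha$, where $\partial\widetilde S_\Gamma$ is one particular closed lift of $C_\Gamma$.

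Granting this, the edge decomposition is purely formal. With the parametrization chosen so that $C_\Gamma(t_i)=p_i$ for cyclically ordered $t_0<\dots<t_{n-1}<t_n\equiv t_0$, the arcs $\widetilde C_\Gamma([t_i,t_{i+1}])$ cover the closed curve $\widetilde C_\Gamma$ and overlap only at endpoints, so additivity of the line integral yields
\begin{align*}
\oint_{\widetilde C_\Gamma}\alpha=\sum_i\int_{\widetilde C_\Gamma([t_i,t_{i+1}])}\alpha.
\end{align*}
Combining this with the previous identity gives \eqref{eq:AreaByHopf}.

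The step that needs genuine care — and which I expect to be the main obstacle — is justifying that \eqref{eq:AreaByHopf} holds for an \emph{arbitrary} closed lift and that its right-hand side is well defined only modulo $4\pi$, since \propref{prop:liftedGreenTheorem} as applied above produces only the one distinguished lift $\partial\widetilde S_\Gamma$. I would first note that closed lifts always exist: pulling back the Hopf bundle along $C_\Gamma$ gives a principal circle bundle over $\SS^1$, which is trivial and hence admits a global section. Next, any two closed lifts $\widetilde C_\Gamma$ and $\widetilde C_\Gamma'$ of the same $C_\Gamma$ differ by a gauge map, $\widetilde C_\Gamma'=(\lhd e^{-\ii\psi})\widetilde C_\Gamma$ for some $\psi\colon\SS^1\to\RR/2\pi\ZZ$; using the equivariance $(\lhd e^{-\ii\theta})^*\alpha=\alpha$ together with $\alpha(\xi)=2$ on the fiber generator $\xi=\tfrac{d}{d\theta}\big\vert_{\theta=0}(\lhd e^{-\ii\theta})$, one finds $\alpha(\dot{\widetilde C}_\Gamma')=\alpha(\dot{\widetilde C}_\Gamma)+2\dot\psi$ and therefore
\begin{align*}
\oint_{\widetilde C_\Gamma'}\alpha-\oint_{\widetilde C_\Gamma}\alpha=2\oint\dot\psi\,dt=4\pi\deg\psi\in 4\pi\ZZ.
\end{align*}
Thus the integral is lift-independent modulo $4\pi$, and the same computation shows that one full traversal of a Hopf fiber contributes $4\pi$; matching this period against the $4\pi$ ambiguity of $\Area(\Gamma)$ coming from full wrappings of $S_\Gamma$ around $\SS^2$ is the bookkeeping that completes the argument.
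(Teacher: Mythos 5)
Your proposal is correct and follows essentially the same route as the paper, which obtains \corref{cor:HopfArea} by combining \propref{prop:HopfPrequantum} with the lifted Green's theorem (\propref{prop:liftedGreenTheorem}) via \eqref{eq:area_via_prequantum_bundle} and then splitting the closed line integral edge-wise. Your gauge computation showing that two closed lifts differ by \(4\pi\deg\psi\) (using \((\lhd e^{-\ii\theta})^*\alpha=\alpha\) and \(\alpha=2\) on the fiber generator, so one fiber traversal contributes \(4\pi\), matching the \(4\pi\) ambiguity in \eqref{eq:AreaDefinition}) is accurate and merely makes explicit a step the paper states without proof when passing from \(\partial\widetilde S\) to an arbitrary lift \(\widetilde C_\Gamma\).
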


\section{The area formula via the Hopf fibration}

In this section, we obtain an explicit formula by evaluating each piece of the line integral in  \eqref{eq:AreaByHopf} along a great circular arc lifted onto \(\SS^3\). 
We first define {\em dihedral} for a pair of spherical points.
\begin{definition}
    For each \(p,p'\in\SS^2\), \(p\neq -p'\), define
    \begin{align}
        \Dihedral(p,p')\coloneqq \sqrt{1+\langle p,p'\rangle\over 2} + {p\times p'\over \sqrt{2+2\langle p,p'\rangle}} \in\SS^3\subset\HH.
    \end{align}
    which is the unit quaternion that represents the minimal rotation that rotates \(p\) to \(p'\) i.e. \(rp\conj r = p'\), \(r = \Dihedral(p,p')\).
\end{definition}
There is a unique (unnormalized) \emph{rotation axis} \(v\in T_1 \SS^3 (= \operatorname{Im}\HH) \) for the minimal rotation for non-antipodal \(p,p'\in \SS^2\). This axis \(v\) is parallel to \(p\times p'\) and  \( \Dihedral(p,p')=e^{\frac{v}{2}}\) by the exponential map on  \(\SS^3\).
We also note that  any \(q\in\pi^{-1}p\), the point \(\Dihedral(p,p')q\) is on \(\pi^{-1}p'\). 

We will see that each summand in \eqref{eq:AreaByHopf} can be explicitly expressed in terms of the dihedral after recognizing how the dihedral represents the horizontal lift over a great circular arc.

\begin{proposition}[Horizontal lift on \((\SS^3,\alpha)\)]\label{prop:horitontal_lift}
Consider the Hopf fibration \(\pi\colon(\SS^3,\alpha)\allowbreak \to (\SS^2,\sigma)\) described in \secref{sec:HopfFibration}.
Let \(p_0, p_1\) be two arbitrary non-antipodal points on \(\SS^2\), let
    \(\gamma\colon [0,1]\to \SS^2\) be the great circular arc joining \( p_0 \) and \(p_1\), and let \(v\in T_1 \SS^3\) be the imaginary quaternion such that \(e^{\frac{v}{2}}= \Dihedral(p_0,p_1)\).
Then for each given point \(q\in \pi^{-1}p_0\), the horizontal lift \(\tilde{\gamma}_H\) of \(\gamma\)  with \(\tilde{\gamma}_H(0)=q\) is given by 
\begin{align*}
    \tilde{\gamma}_H (t)= e^{\frac{vt}{2}} q.
\end{align*}
\begin{proof}
It follows from  
\(\gamma(t)= e^{\frac{vt}{2}} p_0 e^{-\frac{vt}{2}}\)
that \(\tilde{\gamma}_H\) is a lift over \(\gamma\) with respect to \(\pi\).
We now show that \(\tilde{\gamma}_H\) is horizontal. 
At each \(q\in \SS^3\), the tangent space and the horizontal subspace with respect to 
the connection 1-form \(\alpha/2\) are \(T_q \SS^3=\operatorname{Span}( q\ii, q\jj, q\kk)\) and \(
H_q \SS^3=\operatorname{Span}( q\jj, q\kk)\) respectively.
We have for each \(t\) that
\begin{align*}
    \frac{d}{dt}\tilde{\gamma}_H (t)
    = \frac{1}{2} e^{\frac{vt}{2}} v q.
\end{align*}
Here we note that the conjugation \( h \mapsto q h \bar{q}\) is an isometry in \(\operatorname{Im}(\HH)\), from which it follows that 
\(v\in q (H_q \SS^3) \bar{q}\)
as
\begin{align*}
    0=\langle v,p\rangle=\langle q (\bar{q} v  q)\bar{q}, q\ii \bar{q}\rangle.
\end{align*}
Therefore, \(e^{vt\over 2}vq\) is horizontal.
\end{proof}
\end{proposition}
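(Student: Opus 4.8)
The plan is to verify two things independently: that the curve $\tilde\gamma_H(t) = e^{vt/2}q$ projects under $\pi$ onto the prescribed great circular arc $\gamma$ and starts at $q$, and that its velocity is everywhere horizontal for the connection $\alpha/2$.

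First I would check the lift property. Since $v$ is imaginary, $\conj{e^{vt/2}} = e^{-vt/2}$, hence
\[
\pi(\tilde\gamma_H(t)) = e^{vt/2}q\ii\conj q\,e^{-vt/2} = e^{vt/2}\pi(q)e^{-vt/2} = e^{vt/2}p_0\,e^{-vt/2}.
\]
This is exactly the rotation of $p_0$ about the axis $v$; by the defining property $\Dihedral(p_0,p_1) = e^{v/2}$ it carries $p_0$ to $p_1$ at $t=1$ and sweeps out the minimal geodesic between them, which is $\gamma$. Combined with $\tilde\gamma_H(0) = q$, this identifies $\tilde\gamma_H$ as the lift of $\gamma$ issuing from $q$.

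Next I would address horizontality. Differentiating gives $\frac{d}{dt}\tilde\gamma_H(t) = \tfrac12 e^{vt/2}vq$, and I would rewrite this velocity in the form $q_t\xi$ with $q_t := \tilde\gamma_H(t)$ and $\xi\in\Im\HH$, using the identification $T_{q_t}\SS^3 = \operatorname{Span}(q_t\ii,q_t\jj,q_t\kk)$. A direct computation yields the $t$-independent value $\xi = \tfrac12\conj q v q$. Since $H_{q_t}\SS^3 = \operatorname{Span}(q_t\jj,q_t\kk)$, horizontality is then equivalent to $\xi\perp\ii$, i.e. $\langle\conj q v q,\ii\rangle = 0$.

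The conceptual heart of the argument — and the step demanding the most care — is the evaluation of this last inner product. The key is that conjugation $h\mapsto qh\conj q$ is an orthogonal transformation of $\Im\HH$, so that $\langle\conj q v q,\ii\rangle = \langle v, q\ii\conj q\rangle = \langle v,\pi(q)\rangle = \langle v,p_0\rangle$. Finally, since the rotation axis $v$ is parallel to $p_0\times p_1$ it is orthogonal to $p_0$, giving $\langle v,p_0\rangle = 0$. Thus the velocity is horizontal for every $t$, which completes the proof.
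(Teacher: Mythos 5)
Your proof is correct and follows essentially the same route as the paper's: verify the lift property via \(\pi(\tilde\gamma_H(t)) = e^{vt/2} p_0 e^{-vt/2}\), differentiate to get \(\tfrac12 e^{vt/2} v q\), and use the fact that conjugation \(h\mapsto qh\conj q\) is an isometry of \(\Im\HH\) to reduce horizontality to \(\langle v, p_0\rangle = 0\). If anything, your explicit rewriting of the velocity as \(q_t\xi\) with the \(t\)-independent \(\xi = \tfrac12\conj q v q\) spells out why horizontality holds at every \(t\), a step the paper's final sentence leaves implicit.
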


\begin{lemma}
\label{lem:SegmentIntegral}
Assume \(\pi\colon(\SS^3,\alpha)\to(\SS^2,\sigma)\), \(p_0,p_1\in \SS^2\), and \(\gamma\colon [0,1]\to \SS^2\)  as in \propref{prop:horitontal_lift}.
    Then for any lift \(\tilde\gamma\colon[0,1]\to\SS^3\) of \(\gamma\), 
    the line integral \(\int_{\tilde\gamma}\alpha\) is explicitly given by
        \begin{align}
        \int_{\tilde\gamma}\alpha = 2\arg\bigl(\conj{q_1}\Dihedral(p_0,p_1)q_0\bigr), 
    \end{align}
    where \(q_0 \coloneqq \tilde\gamma(0)\), \(q_1  \coloneqq \tilde\gamma(1)\).
    \begin{proof}
  \propref{prop:horitontal_lift} asserts that \(q_0\in \pi^{-1}p_0\) 
  and \(q_H\coloneqq \Dihedral(p_0,p_1) q_0 \in \pi^{-1} p_1\) are connected by the unique horizontal lift \(\tilde{\gamma}_H\).
Let us consider a parametric surface $\Delta \subset \SS^3$ given by
\begin{align*}
    \Delta \coloneqq \left\{  \tilde{\gamma}_H(t) e^{\ii\theta} \in \SS^3 \big| t\in[0,1], \theta\in[0,\theta_t] \  \right\},
\end{align*}
where \(\theta_t\) for each \(t\) is the angular difference  \(\theta_t\coloneqq \arg\left(\overline{\tilde{\gamma}(t)}\tilde{\gamma}_H(t)) \right)\).  We first obtain that 
\begin{align*}
    \int_{\partial\Delta} \alpha 
    = \int_\Delta d\alpha 
    = \int_{\Delta} \pi^*\sigma =0,
\end{align*}
as \(\pi(\Delta)=\gamma([0,1])\).

Note that the boundary \(\partial \Delta\) is a  closed path consisting of three segments: (I).  the \sttext{lift} lifted path \(\tilde{\gamma}\) from \(q_0\) to \(q_1\); (II). the vertical path \(\{q_1 e^{\ii\theta} \ |\ \theta\in [0,\theta_1] \}\) from  \(q_1\) to \(q_H\);  (III). the horizontal lift \(\tilde{\gamma}_H\) from \(q_H\) to \(q_0\). 
Since the integral of \(\alpha\) along the third path makes no contribution, we have
\begin{align*}
    \int_{\tilde{\gamma}} \alpha 
    &= - \int_{\{q_1 e^{\ii\theta} \ | \ \theta\in [0,\theta_1] \}}\alpha 
    = \int_{\{q_1 e^{-\ii\theta} \ | \ \theta\in [-\theta_1,0] \}}\alpha 
    = \int ^0 _{-\theta_1} \alpha 
    \left(\frac{d}{d\theta} (\lhd e^{-\ii\theta}) q_1 \right) d\theta \\
    &= \int ^0 _{-\theta_1}  2 d\theta
    =  2\arg\left(\conj{q_1}q_H\right),
\end{align*}
which concludes the proof. 
\end{proof}
\end{lemma}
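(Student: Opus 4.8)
The plan is to reduce the integral along the arbitrary lift \(\tilde\gamma\) to the integral along the horizontal lift from \propref{prop:horitontal_lift}, which vanishes, so that only a controllable vertical correction survives. Concretely, I would fix the horizontal lift \(\tilde\gamma_H\) starting at the common initial point \(q_0 = \tilde\gamma(0)\); by \propref{prop:horitontal_lift} it ends at \(q_H \coloneqq \Dihedral(p_0,p_1)\,q_0 \in \pi^{-1}p_1\). Both \(\tilde\gamma\) and \(\tilde\gamma_H\) project to the same arc \(\gamma\) and agree at \(t=0\), while their endpoints \(q_1\) and \(q_H\) differ only by a phase along the fiber over \(p_1\). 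The whole computation then amounts to measuring that phase.

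Next I would interpolate the two lifts by a ruled surface \(\Delta\subset\SS^3\) swept out by the vertical (fiber) arcs joining \(\tilde\gamma_H(t)\) to \(\tilde\gamma(t)\) for each \(t\), i.e.\ \(\Delta = \{\tilde\gamma_H(t)e^{\ii\theta} : t\in[0,1],\ \theta\in[0,\theta_t]\}\) with \(\theta_t \coloneqq \arg\bigl(\conj{\tilde\gamma(t)}\tilde\gamma_H(t)\bigr)\). The decisive observation is that \(\pi(\Delta) = \gamma([0,1])\) is one-dimensional, so the map \(\pi\circ\Delta\) has differential of rank at most one everywhere and hence \((\pi\circ\Delta)^*\sigma \equiv 0\). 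Combining this with \(d\alpha = \pi^*\sigma\) from \propref{prop:HopfPrequantum} and Stokes' theorem gives \(\oint_{\partial\Delta}\alpha = \iint_\Delta d\alpha = \iint_\Delta \pi^*\sigma = 0\).

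Finally I would read off \(\partial\Delta\) as three consecutive arcs: the lift \(\tilde\gamma\) from \(q_0\) to \(q_1\), a vertical fiber arc from \(q_1\) to \(q_H\), and the reversed horizontal lift from \(q_H\) back to \(q_0\). The horizontal lift contributes nothing to \(\int\alpha\) by construction, and along the vertical direction \(\alpha\) is the constant dictated by the normalization \(\tfrac12\alpha\bigl(\tfrac{d}{d\theta}\big|_{\theta=0}(\lhd e^{-\ii\theta})q\bigr)=1\). Integrating that constant over the fiber arc and solving the relation \(\oint_{\partial\Delta}\alpha = 0\) for \(\int_{\tilde\gamma}\alpha\) produces \(2\arg(\conj{q_1}q_H) = 2\arg\bigl(\conj{q_1}\Dihedral(p_0,p_1)q_0\bigr)\), as claimed.

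I expect the main delicate point to be the bookkeeping of orientation and sign on the vertical segment: \(\alpha\) evaluates to \(+2\) on the direction generated by \(q\mapsto qe^{-\ii\theta}\) but to \(-2\) on the opposite orientation, and the boundary orientation of \(\partial\Delta\) must be matched to this so that the final \(\arg\) carries the correct sign (swapping the endpoints of the fiber arc flips it). The vanishing \(\iint_\Delta\pi^*\sigma=0\) also deserves a clean justification via the rank-one degeneracy of \(\pi\circ\Delta\) rather than an explicit pullback computation. As an alternative that sidesteps the surface construction entirely, one could instead write \(\tilde\gamma = \tilde\gamma_H e^{\ii\phi}\) and use equivariance of the connection form \(\alpha/2\): since the structure group \(\SS^1\) is abelian and \(\tilde\gamma_H\) is horizontal, \(\alpha(\dot{\tilde\gamma}) = -2\dot\phi\), and \(\int_{\tilde\gamma}\alpha = -2(\phi(1)-\phi(0))\) recovers the same \(\arg\) expression; this variant trades the geometric Stokes argument for a short direct quaternion computation but raises the same sign-normalization issue.
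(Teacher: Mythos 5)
Your main argument coincides with the paper's own proof in every essential step: the same ruled surface \(\Delta=\{\tilde\gamma_H(t)e^{\ii\theta}\}\) interpolating between \(\tilde\gamma\) and the horizontal lift, the same Stokes argument with \(\iint_\Delta\pi^*\sigma=0\) because \(\pi(\Delta)=\gamma([0,1])\) is one-dimensional, the same three-arc boundary decomposition, and the same vertical evaluation yielding \(2\arg\bigl(\conj{q_1}\Dihedral(p_0,p_1)q_0\bigr)\), so this is correct and essentially identical. The direct variant you sketch at the end --- writing \(\tilde\gamma=\tilde\gamma_H e^{\ii\phi}\) and integrating \(\alpha(\dot{\tilde\gamma})=-2\dot\phi\) using equivariance --- is also sound and would shortcut the surface construction, but as the proposal stands it follows the paper's route.
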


As a direct result of  \corref{cor:HopfArea} and \lemref{lem:SegmentIntegral}, we obtain our main theorem:

\begin{theorem}[Area formula via the Hopf fibration]\label{th:area_formula}
    Let \(\Gamma = (p_0,\ldots,p_{n-1}), p_i\in\SS^2, i\in \ZZ_n\), be a spherical polygon.  For each \(i\in\ZZ_n\), pick an arbitrary lift \(q_i\in\SS^3\)  i.e. \(\pi(q_i) = p_i\).  Then,
    \begin{align}
    \label{eq:FinalAreaFormula}
        \Area(\Gamma) = 2\sum_{i=0}^{n-1}\arg\bigl( \conj{q_{i+1}}\Dihedral(p_{i},p_{i+1})q_i\bigr)\mod 4\pi.
    \end{align}
\end{theorem}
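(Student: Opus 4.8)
The plan is to assemble the formula directly from \corref{cor:HopfArea} and \lemref{lem:SegmentIntegral}; the only genuine work is to reconcile the \emph{single} continuous lift required by the corollary with the \emph{independent} per-vertex lifts \(q_i\) permitted in the statement. First I would, given the arbitrary fiber points \(q_i\in\pi^{-1}(p_i)\), construct a continuous lift \(\widetilde C_\Gamma\colon\SS^1\to\SS^3\) of the polygon curve \(C_\Gamma\) satisfying \(\widetilde C_\Gamma(t_i)=q_i\) for every \(i\in\ZZ_n\). This is possible because \(\pi\) is a locally trivial circle bundle with connected fibers: over each edge arc \(\gamma_i\) from \(p_i\) to \(p_{i+1}\), the total space \(\pi^{-1}(\gamma_i([t_i,t_{i+1}]))\) is a cylinder, so there is a continuous path from \(q_i\) to \(q_{i+1}\) projecting onto \(\gamma_i\); concatenating these edge lifts produces a continuous \(\widetilde C_\Gamma\) taking the prescribed vertex values.

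Next I would apply \corref{cor:HopfArea} to write \(\Area(\Gamma)=\sum_i\int_{\widetilde C_\Gamma([t_i,t_{i+1}])}\alpha \bmod 4\pi\), and then invoke \lemref{lem:SegmentIntegral} edge by edge. Since \(\widetilde C_\Gamma\) restricted to \([t_i,t_{i+1}]\) is a lift of the great circular arc \(\gamma_i\) with endpoints \(\widetilde C_\Gamma(t_i)=q_i\) and \(\widetilde C_\Gamma(t_{i+1})=q_{i+1}\), the lemma gives \(\int_{\widetilde C_\Gamma([t_i,t_{i+1}])}\alpha = 2\arg\bigl(\conj{q_{i+1}}\Dihedral(p_i,p_{i+1})q_i\bigr)\). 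Summing over \(i\in\ZZ_n\) then yields \eqref{eq:FinalAreaFormula}.

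The one point requiring care is the branch of \(\arg\), and I expect this to be the main (if minor) obstacle to state cleanly. The line integral over each edge is an honest real number, whereas \(\arg\) is only well-defined modulo \(2\pi\), so each summand \(2\arg(\cdots)\) carries a \(4\pi\ZZ\) ambiguity relative to the value produced by the continuous lift. This is harmless precisely because the whole identity lives in \(\RR/(4\pi\ZZ)\): every branch ambiguity is an integer multiple of \(4\pi\) and therefore invisible modulo \(4\pi\). Since the left-hand side \(\Area(\Gamma)\) is fixed while the argument above holds for every admissible choice of the \(q_i\), the right-hand side is automatically independent of those choices. As a consistency check one can verify this directly: replacing a single \(q_i\) by \(q_i e^{-\ii\phi}\) multiplies the complex number in the edge-\(i\) summand on the right by \(e^{-\ii\phi}\) and the one in the edge-\((i-1)\) summand on the left by \(e^{\ii\phi}\), shifting those two summands by \(-2\phi\) and \(+2\phi\) respectively, so the total is unchanged.
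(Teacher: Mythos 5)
Your proposal is correct and takes essentially the same route as the paper, whose entire proof is the remark that the theorem follows directly from \corref{cor:HopfArea} and \lemref{lem:SegmentIntegral}; your construction of a closed lift through the prescribed fiber points and your observation that the branch ambiguity of \(\arg\) contributes only integer multiples of \(4\pi\), hence vanishes in \(\RR/(4\pi\ZZ)\), just make explicit the details the paper leaves implicit. Your final consistency check (replacing \(q_i\) by \(q_i e^{-\ii\phi}\) shifts two adjacent summands by \(\mp 2\phi\)) is also correct.
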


The formula requires an arbitrary lift \(q_i\in\SS^3\) of the vertex positions \(p_i\in\SS^2\) for \(i\in\ZZ_n\). An example is
\begin{align}
\label{eq:ChoiceOfLift}
    q_i = 
    \begin{cases}
        \Dihedral(\ii,p_i),& \langle p_i,\ii\rangle \geq 0,\\
        \Dihedral(-\ii,p_i)\jj,& \langle p_i,\ii\rangle <0,
    \end{cases}
\end{align}
which is uniquely defined globally.

As a special case of \autoref{th:area_formula}, choosing the horizontal lift results in no contribution of \(\alpha\) except for the endpoint of the polygon.
\begin{corollary}[Area formula by  
the horizontal lift ]\label{cor:horizontal_lift_area_formula}
Let \(q_0\) be a point in the fiber \(\pi^{-1}p_0\) and let us inductively define \(q_{i+1}:=\Dihedral(p_i,p_{i+1}) q_i \) for \(i=0,\ldots,n-1\). Then we have
\begin{align}
    \operatorname{Area}(\Gamma) 
    =2\arg(\conj{q_{0}} q_n  ).
\end{align}

\end{corollary}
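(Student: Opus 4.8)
The plan is to obtain the corollary as an immediate specialization of \thmref{th:area_formula}: the recursion \(q_{i+1}=\Dihedral(p_i,p_{i+1})q_i\) is precisely the endpoint of the horizontal lift of the \(i\)-th edge starting at \(q_i\) (\propref{prop:horitontal_lift}), so concatenating these segments produces one continuous horizontal lift of \(C_\Gamma\) that begins at \(q_0\) and ends at \(q_n\). I expect every edge contribution in \eqref{eq:FinalAreaFormula} to vanish except the one that closes the loop.

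First I would confirm that the inductively defined points are genuine lifts, \(q_i\in\pi^{-1}(p_i)\). This is an easy induction using the property recorded just after the definition of \(\Dihedral\): if \(q\in\pi^{-1}(p)\) then \(\Dihedral(p,p')q\in\pi^{-1}(p')\). In particular \(q_n=\Dihedral(p_{n-1},p_0)q_{n-1}\in\pi^{-1}(p_0)\) lies in the same fiber as \(q_0\), so \(\conj{q_0}q_n\in\SS^1\) and \(\arg(\conj{q_0}q_n)\) is well defined.

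Next I would substitute the lifts \(q_0,\dots,q_{n-1}\) into \eqref{eq:FinalAreaFormula}. For each interior edge \(i=0,\dots,n-2\) the defining relation gives \(\conj{q_{i+1}}\Dihedral(p_i,p_{i+1})q_i=\conj{q_{i+1}}q_{i+1}=1\), so \(\arg(1)=0\) and that summand drops out. The only term that survives is \(i=n-1\), and here I must keep the cyclic convention of \thmref{th:area_formula} in mind: the conjugated factor is the wrapped-around lift \(q_n\equiv q_0\), i.e.\ the \emph{original} \(q_0\), not the horizontally propagated endpoint. That summand is therefore \(\arg(\conj{q_0}\Dihedral(p_{n-1},p_0)q_{n-1})=\arg(\conj{q_0}q_n)\), giving \(\Area(\Gamma)=2\arg(\conj{q_0}q_n)\bmod 4\pi\).

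The one point demanding care — and the only real obstacle — is this clash between two meanings of the index \(n\): in \thmref{th:area_formula} the cyclic identification forces \(q_n=q_0\), whereas the horizontal construction deliberately produces a distinct \(q_n\) that records the holonomy around \(C_\Gamma\). Reading the closing summand with the original \(q_0\) in the conjugated slot while interpreting the horizontal endpoint as the new \(q_n\) is exactly what collapses the sum to the single stated term. Equivalently, one could argue through \corref{cor:HopfArea} and \lemref{lem:SegmentIntegral}: each horizontal segment contributes \(2\arg(\conj{q_H}q_H)=0\), and closing the lifted loop requires one vertical segment from \(q_n\) back to \(q_0\), whose integral is \(2\arg(\conj{q_0}q_n)\).
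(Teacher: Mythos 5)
Your proof is correct and takes essentially the same route the paper intends: the corollary is the direct specialization of \thmref{th:area_formula} to the horizontal lift, where each summand with \(i=0,\dots,n-2\) equals \(\arg\bigl(\conj{q_{i+1}}q_{i+1}\bigr)=\arg(1)=0\) and only the closing term \(2\arg\bigl(\conj{q_0}\Dihedral(p_{n-1},p_0)q_{n-1}\bigr)=2\arg(\conj{q_0}q_n)\) survives, matching the paper's remark that the horizontal lift leaves no contribution of \(\alpha\) except at the endpoint. Your explicit handling of the clash between the cyclic convention \(q_n\equiv q_0\) in the theorem and the holonomy endpoint \(q_n\) of the recursion is precisely the right reading, and your alternative closing argument via \corref{cor:HopfArea} and \lemref{lem:SegmentIntegral} is the same mechanism viewed at the level of the line integral.
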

At the end of the section, we make a remark regarding the numerical stability of the formula \eqref{eq:FinalAreaFormula}.
\begin{remark}
    The branching discontinuities in the ``\(\arg\)'' function in \eqref{eq:FinalAreaFormula} and the ``if'' statement in \eqref{eq:ChoiceOfLift} are smooth in the mod-\(4\pi\) arithmetic of \eqref{eq:FinalAreaFormula}.
The only calculation that can be numerically unstable is the evaluation of the \(\Dihedral\) function when the two arguments are close to antipodal.
This antipodal dihedral evaluation is avoided by the choice \eqref{eq:ChoiceOfLift}.
The entire evaluation of our area formula \eqref{eq:FinalAreaFormula} with \eqref{eq:ChoiceOfLift} is numerically stable as long as we do not have antipodal edges where \(\langle p_i,p_{i+1}\rangle \approx-1\), which is easily preventable by inserting a midpoint to any close-to-antipodal edge.
\end{remark}

\section{Derivation of the classical formula by \(\SO(3)\) as a prequantum bundle}\label{sec:derivation_classical_formula}
The Hopf fibration structure can also be seen in the group \(\SO(3)\) of 3D rotations. In fact, the classical formula \eqref{eq:GB_formula} can be interpreted as a special case of the lifted Green's theorem on \(\SO(3)\) using a specific lift not as numerically stable as our formula (\thmref{th:area_formula}) using either the lift \eqref{eq:ChoiceOfLift} or the horizontal lift (\corref{cor:horizontal_lift_area_formula}).
We see that \(\SO(3)\) as the unit tangent bundle over \(\SS^2\) is also a prequantum bundle with a specific connection form. As \(\SS^3\) is a double cover of \(\SO(3)\), the Hopf fibration \(\pi:\SS^3\rightarrow \SS^2\) has a decomposition \(\pi=\pi_2 \circ \pi_1\) given by,
\begin{align}
\pi_1\colon& \SS^3 \rightarrow \SO(3)\\
    &q \mapsto (q\ii\bar{q}, q\jj\bar{q}, q\kk\bar{q}), \nonumber 
\end{align}
and 
\begin{align}\label{eq:SO3map}
\pi_2\colon& \SO(3) \rightarrow \SS^2 \\
    &(p_1,p_2,p_3) \mapsto p_1,  \nonumber 
\end{align}
where each element of \(\SO(3)\) is represented by three column vectors. 

The tangent space \(T_P \SO(3)\) at each \(P\in \SO(3)\) is \(dL_P \mathfrak {so}(3)=\left\{PW | W^T=-W\right\}\). We identify   each 
\(W = 
\begin{psmallmatrix}
0&-\omega_3&\omega_2\\
\omega_3&0&-\omega_1\\
-\omega_2&\omega_1&0
\end{psmallmatrix} \in  \mathfrak {so}(3)\)  with \(\omega=(\omega_1,\omega_2,\omega_3)\in \RR^3\).
We define a\sttext{n} 1-form \(\eta\) by  
\begin{align}\label{eq:SO3_1Form}
        \eta|_P(PW)\coloneqq -\omega_1.
\end{align}
Then \((\pi_2,\eta)\) is a principal circle bundle with \(\SS^1\) action \(\lhd\colon\SS^1\times \SO(3)\to \SO(3),\) by \((\lhd e^{\ii\theta})P = P
\begin{psmallmatrix}
    1&0&0\\
    0&\cos\theta&-\sin\theta\\
    0&\sin\theta&\cos\theta
\end{psmallmatrix}.
\)
\begin{proposition}\label{prop:SO3Prequantum}
        The 1-form \(\eta\in\Omega^1(\SO(3))\) defined in \eqref{eq:SO3_1Form} and the map \(\pi_2\colon \SO(3)\allowbreak\to\SS^2\) 
        defined in \eqref{eq:SO3map} satisfy \(d\eta = \pi_2^*\sigma\), where \(\sigma\in\Omega^2(\SS^2)\) is the standard area form on the unit sphere.  That is, \((\SO(3),\eta)\) is a prequantum bundle over \((\SS^2,\sigma)\). 
\end{proposition}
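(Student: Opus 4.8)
The plan is to reduce the claim to the already-established Hopf case (\propref{prop:HopfPrequantum}) by exploiting the double cover \(\pi_1\colon\SS^3\to\SO(3)\), which fits into the factorization \(\pi=\pi_2\circ\pi_1\). Since \(\pi_1\) is a covering map, hence a surjective local diffeomorphism, its pullback \(\pi_1^*\) on differential forms is injective: if \(\pi_1^*\beta=0\) then, choosing preimages of any point and of any tangent vectors through the isomorphism \(d\pi_1\), one recovers \(\beta=0\). It therefore suffices to prove the pulled-back identity \(\pi_1^*(d\eta)=\pi_1^*(\pi_2^*\sigma)\) on \(\SS^3\). The right-hand side equals \((\pi_2\circ\pi_1)^*\sigma=\pi^*\sigma=d\alpha\) by \propref{prop:HopfPrequantum}, while the left-hand side is \(d(\pi_1^*\eta)\). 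Hence the whole proposition follows once I establish the clean identity
\[
    \pi_1^*\eta = \alpha .
\]

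To prove this identity I would compute \(d\pi_1\) on a tangent vector. Writing a tangent vector at \(q\in\SS^3\) as \(q\xi\) with \(\xi\in\Im\HH=T_1\SS^3\) and using the curve \(s\mapsto qe^{s\xi}\), I differentiate the columns of \(\pi_1(qe^{s\xi})=\bigl(qe^{s\xi}\ii e^{-s\xi}\conj q,\ \ldots\bigr)\) at \(s=0\) (using \(\overline{e^{s\xi}}=e^{-s\xi}\)) to get the velocities \(q[\xi,\ii]\conj q,\ q[\xi,\jj]\conj q,\ q[\xi,\kk]\conj q\). Setting \(P=\pi_1(q)\) and writing \(d\pi_1(q\xi)=PW\), I match columns: since \(P\) acts on \(\Im\HH\) by \(h\mapsto qh\conj q\) and \([\xi,h]=2\,\xi\times h\) for imaginary quaternions, comparing \(q[\xi,h]\conj q\) with \((PW)h=q(Wh)\conj q=q(\omega\times h)\conj q\) yields \(\omega=2\xi\) under the identification \(W\leftrightarrow\omega\) of \eqref{eq:SO3_1Form}. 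In particular \(\omega_1=2\xi_1\), so \((\pi_1^*\eta)(q\xi)=\eta|_P(PW)=-\omega_1=-2\xi_1\).

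It then remains to check that \(\alpha\) evaluates the same way. From \eqref{eq:Hopf1Form}, \(\alpha(q\xi)=2\Re(\ii\conj q\,q\xi)=2\Re(\ii\xi)\), and a short quaternion computation gives \(\Re(\ii\xi)=-\xi_1\) for \(\xi=\xi_1\ii+\xi_2\jj+\xi_3\kk\); thus \(\alpha(q\xi)=-2\xi_1\), matching \(\pi_1^*\eta\). This establishes \(\pi_1^*\eta=\alpha\), and combined with the reduction above and the injectivity of \(\pi_1^*\) yields \(d\eta=\pi_2^*\sigma\), completing the proof.

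I expect the main obstacle to be the bookkeeping in the differential computation: correctly handling the left- versus right-translation conventions for tangent vectors of \(\SS^3\) and of \(\SO(3)\), and carrying the factor of \(2\) that arises both from the commutator identity \([\xi,h]=2\,\xi\times h\) and from the normalization in \(\alpha\). These factors and signs must conspire exactly for \(\pi_1^*\eta=\alpha\) to hold on the nose (rather than merely up to a closed correction term), so I would verify the identification \(W\leftrightarrow\omega\) and the sign in \eqref{eq:SO3_1Form} against at least two of the three coordinate columns before asserting the identity.
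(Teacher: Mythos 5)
Your proof is correct, but it takes a genuinely different route from the paper's. The paper proves \propref{prop:SO3Prequantum} by a direct computation on \(\SO(3)\): since \(\eta\) is left-invariant, \(d\eta|_P(PV,PW)=-\eta|_I([V,W])=\nu_2\omega_3-\omega_2\nu_3\), and the \(\SO(3)\)-invariance of \(\sigma\) reduces \(\pi_2^*\sigma|_P(PV,PW)\) to \(\sigma|_{\ii}(v_1,w_1)=\nu_2\omega_3-\omega_2\nu_3\), so the two sides match pointwise with no reference to \(\SS^3\). You instead reduce to the already-established Hopf case via the double cover \(\pi_1\), using injectivity of \(\pi_1^*\) (valid since \(\pi_1\) is a surjective local diffeomorphism) together with the key identity \(\pi_1^*\eta=\alpha\); your verification of that identity is sound — \(\pi_1\) is a homomorphism, the column matching \(q[\xi,h]\conj q = q(\omega\times h)\conj q\) with \([\xi,h]=2\,\xi\times h\) gives \(\omega=2\xi\), hence \((\pi_1^*\eta)(q\xi)=-2\xi_1\), and \(\alpha(q\xi)=2\Re(\ii\xi)=-2\xi_1\) since \(\conj qq=1\) on \(\SS^3\). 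Your route buys something the paper's does not: the relation \(\alpha=\pi_1^*\eta\) is asserted without proof in \secref{sec:derivation_classical_formula} (where it is needed to recover the classical Gauss--Bonnet formula), and your argument supplies exactly that proof as a byproduct. The trade-off is that your proof is not self-contained — it depends on \propref{prop:HopfPrequantum} having been proved first — whereas the paper's computation is intrinsic to \(\SO(3)\), slightly shorter, and independent of the quaternionic model.
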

We give a proof in \autoref{appendix:proof_of_prequantum_bundles}.
To recover the classical formula via \(\pi_2\colon(\SO(3),\eta)\allowbreak\rightarrow(\SS^2, \sigma)\),
we define a lift as follows. We take for each \(p_i\) the forward velocity from \(p_i\) to \(p_{i+1}\) on \(\SS^2\).  It is given at \(p_i\eqqcolon\gamma(t_i)\) by
\begin{align*}
    v(p_i)\coloneqq\lim_{h\to +0}\frac{\gamma(t_i+h)-\gamma(t_i)}{h}\in T_{p_i}\SS^2,
\end{align*}
which is a positive multiple of \(-p_i\times(p_i\times p_{i+1})\).
Then
\begin{align*}
   \tilde{\gamma}(t_i)\coloneqq\left(p_i, \frac{v(p_i)}{|v(p_i)|}, p_i \times \frac{v(p_i)}{|v(p_i)|} \right)
\end{align*} 
defines a lift \( \tilde{\gamma} \colon \SS^1 \rightarrow \SO(3)\) of \(\gamma\). For \(\tilde{\gamma} \), we have 
\begin{align}\label{eq:classical_from_SO3}
    \int_{\tilde\gamma}\eta =-\sum_i \vartheta_i
\end{align}
with the exterior angles \(\vartheta_i\) given in \eqref{eq:exterior_angle1}.
One has to be cautious when drawing conclusion from \eqref{eq:classical_from_SO3} about the area formula using \propref{prop:liftedGreenTheorem}.  In fact, \(\Area(\Gamma) = 2\pi + \int_{\tilde\gamma}\eta = 2\pi-\sum_i\vartheta_i\) noting the extra term of \(2\pi\) (\cf\@ \eqref{eq:GB_formula}).
This is because \(\tilde\gamma\) is a non-contractible loop in \(\SO(3)\) which is not the boundary of a disk.
To obtain the classical formula \eqref{eq:GB_formula}, lift \(\tilde\gamma\colon\SS^1\to\SO(3)\) to \(\hat\gamma\colon \SS^1\to\SS^3\) by the universal cover \(\pi_1\colon\SS^3\to\SO(3)\).  Note that \(\alpha = \pi_1^*\eta\), and that \(\hat\gamma(0)=\hat\gamma(2\pi)\) and \(\lim_{t\nearrow 2\pi}\hat\gamma(t)\) has an angle difference of \(\pi\) in the fiber \(\pi^{-1}(\gamma(0))\).  

The classical formula has a variant that locates a pole \(Z\in\SS^2\) and sums up the signed area of triangles \((p_i,p_{i+1},Z)\). This formula is given as, 
\begin{align}\label{eq:gauss_bonnet}
    \operatorname{Area}(\Gamma) 
    =\sum_i \operatorname{sign}\left(\det(p_i,p_{i+1},Z)\right) \operatorname{UnsignedArea}(p_i,p_{i+1},Z),
\end{align}
where the unsigned area of each spherical triangle \((x_0,x_1,x_2)\) is computed as,
\begin{align}\label{eq:unsigned_triangle_area}
    \operatorname{UnsignedArea}(x_0,x_1,x_2)=-\pi + \sum_{i\in \ZZ_3} \arccos\left(\frac{x_{i-1}\times x_{i}}{|x_{i-1}\times x_{i}|}\cdot\frac{x_i\times x_{i+1}}{|x_i\times x_{i+1}|}\right).  
\end{align}
This formula can also be recovered by the Hopf fibration. 
Setting the lift \(q_i\coloneqq \Dihedral(Z,p_i)\)
for each \(p_i\), we obtain \eqref{eq:gauss_bonnet}. Numerically, this formula is unstable if any of the vertices is close to \(Z\) or \(-Z\), which is explained by the numerical sensitivity of \(\Dihedral(Z,p_i)\) and
\(\frac{p_{i}\times Z}{\|p_{i}\times Z\|}\).

\section{Numerical examples}

In this section, we present numerical examples of area computation for spherical polygons using our formula \eqref{eq:FinalAreaFormula}. 
Moreover, we demonstrate how this formula can also be utilized to determine the total torsion of a space curve, which differs from \(2\pi\) exactly by the enclosed area of the spherical curve traced out by the tangents of the curve.
By comparing the results obtained using both our formula and the classical formula \eqref{eq:GB_formula}, we show that our formula produces consistent and converging solutions, even for singular curves.
This improved numerical robustness allows for more accurate measurements of spherical areas and total torsion.

We employ the horizontal lift approach (\corref{cor:horizontal_lift_area_formula}) for computation in all of our examples.
For a given closed spherical curve \(\gamma\colon [0,2\pi)\rightarrow \SS^2\), 
we use a uniform division \(\{t_i\coloneqq {2\pi i\over n}\}_{i=0}^{n-1}\) of the interval \([0,2\pi)\) to specify the vertices \(\{\gamma(t_i)\}_{i=0}^{n-1}\) with some positive integer \(n\).  This process turns the spherical curve into a spherical polygon.

\subsection{Spherical cardioid}
\label{sec:SphericalCardioid}
We compute the area of a spherical curve \(\gamma\) given by the stereographic projection image \(\gamma = P\circ\gamma_{\RR^2}\) of a planar cardioid \(\gamma_{\RR^2}\) (\autoref{fig:cardioid}).  
Explicitly, the planar cardioid is parametrically  given by
\begin{align*}
    \gamma_{\RR^2}(t)=\left( 2(1-\cos(t))\cos(t), 2(1-\cos(t))\sin(t) \right),
\end{align*}
and the stereographic projection from the plane to the sphere is
\begin{align*}
    P\colon (x,y)\mapsto \frac{1}{x^2+y^2+1}(2x,2y,x^2+y^2-1).
\end{align*}
We compute the area with various numbers \(n\) of vertices using our formula and the classical formula (\ref{eq:GB_formula},\ref{eq:exterior_angle1}).
\autoref{fig:plot_cardioid_Frenet_spiral} (left) shows their numerical results.
Note that this spherical cardioid has a cusp, \ie\@ \(\partial_t\gamma\) changes sign at \(t = 0\).
As the polygon refines (\(n\to\infty\)), the edge lengths adjacent to \(\gamma(t_0)\) decrease to zero superlinearly, and \(\frac{\gamma(t_{n-1})\times \gamma(t_{0})}{|\gamma(t_{n-1})\times \gamma(t_{0})|}\cdot\frac{\gamma(t_{0})\times \gamma(t_{1})}{|\gamma(t_{0})\times \gamma(t_{1})|}\rightarrow -1\).
These conditions make the classical formula numerically unstable as observed in \autoref{fig:plot_cardioid_Frenet_spiral}, left.  
In contrast, our formula is numerically stable despite the presence of the cusp.
\begin{figure}[ht]
\centering
 \begin{minipage}[t]{0.4\textwidth}
\includegraphics[width=\textwidth]{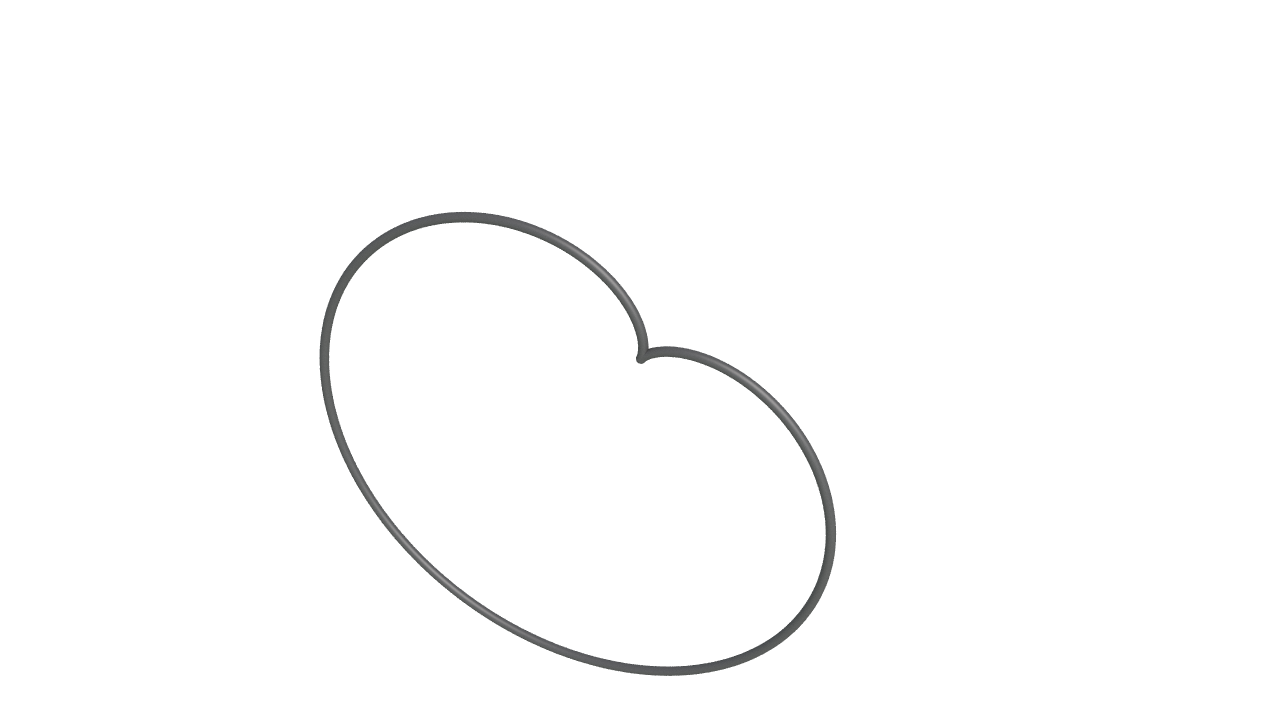}
\end{minipage}
 \begin{minipage}[t]{0.4\textwidth}
\includegraphics[width=\textwidth,angle=0]{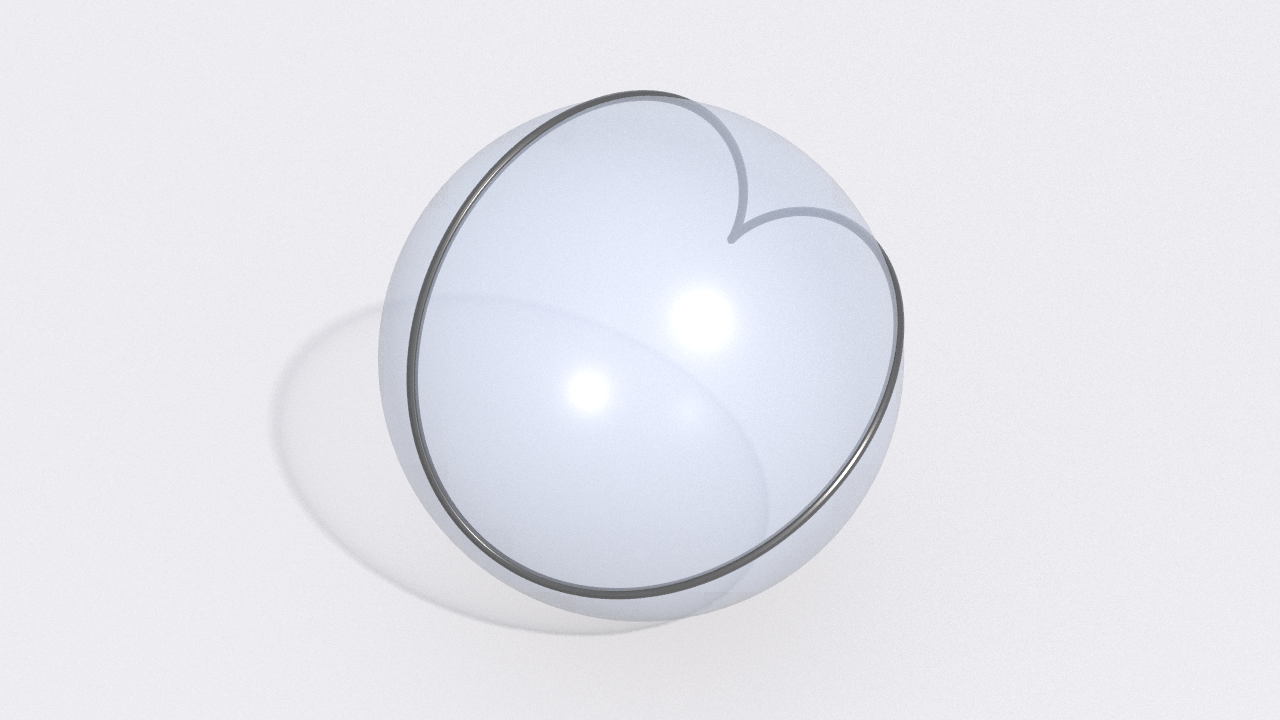}
\end{minipage}
\caption{ Cardioid (left) stereographically projected on the sphere (right). }
\label{fig:cardioid}
\end{figure}

\begin{figure}[htbp]
\centering
 \begin{minipage}[t]{0.45\textwidth}
\includegraphics
 [keepaspectratio, scale=1.]
{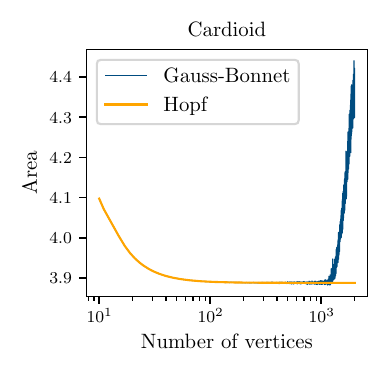}
\end{minipage}
 \begin{minipage}[t]{0.45\textwidth}
\includegraphics
 [keepaspectratio, scale=1.]
{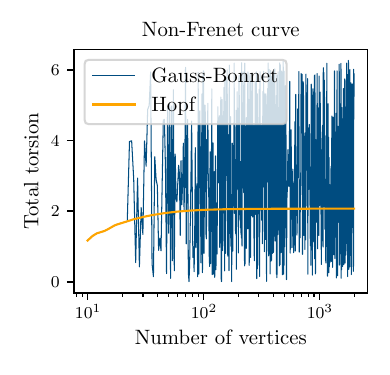}
\end{minipage}
\caption{
Numerical values of the signed areas of spherical curves discretized into spherical polygons with increasing number of vertices, computed using the classical formula (Gauss--Bonnet) and our formula (Hopf).
Both formulae give consistent values when the number of vertices is small, but the classical formula becomes unstable as the number of vertices increases.
Left: The area enclosed by a spherical cardioid (Section~\ref{sec:SphericalCardioid}).
Right: The total torsion \((2\pi- \operatorname{Area}(\gamma'))\) of a non-Frenet space curve (Section~\ref{sec:NonFrenet}).
}
\label{fig:plot_cardioid_Frenet_spiral}
\end{figure}

\subsection{Total torsions of space curves}
\label{sec:TotalTorsion}
Our next examples are about total torsions of closed space curves. For a space curve $\gamma\colon \SS^1\to\RR^3$, 
the total torsion \cite[Ch.\@ 1 Sec.\@ 5.1]{PinkallGross2024geometry}
(equivalently, the total writhe up to a minus sign and a multiple of $2\pi$) can be evaluated as, 
\begin{align}\label{eq:def_total_torsion}
    \operatorname{Torsion}(\gamma) = 2\pi - \Area(\gamma') \quad \mod 2\pi,
\end{align}
where \(\Area(\gamma')\) is the signed area of the unit velocity map given by \(\gamma'=\partial_t\gamma/|\partial_t\gamma|\).
This notion of total torsion also works for a space polygon \(\{\gamma(t_i)\}_i\).  For a space polygon, the unit velocity \( \gamma'\) is given by the normalized edge vector
\begin{align*}
    \gamma'(t_i)=\frac{\gamma(t_{i+1})-\gamma(t_i)}{|\gamma(t_{i+1})-\gamma(t_i)|},
\end{align*}
which forms a spherical polygon, whose signed area can be evaluated by our formula.  
We compute the total torsions of the figure-eight knot (\autoref{fig:figure-eight})
\begin{align*}
    \gamma(t)= \left((2 + \cos(2t))\cos(3t),(2 + \cos(2t)) \sin(3t), \sin(4t)\right),
\end{align*}
and the trefoil knot (\autoref{fig:trefoil}),
\begin{align*}
\gamma(t)=\left(  \sin(t) + 2 \sin(2t),\cos(t)-2\cos(2t), -\sin(3t)\right).
\end{align*}
With sufficiently many vertices, our results converge to numbers that agree with the results in a previous study \cite{najdanovic2021total}: \(-0.5423\) of the figure-eight knot and \(2.2250\) of the trefoil.
\begin{figure}
\centering
 \begin{minipage}[t]{0.4\textwidth}
\includegraphics[width=\textwidth]{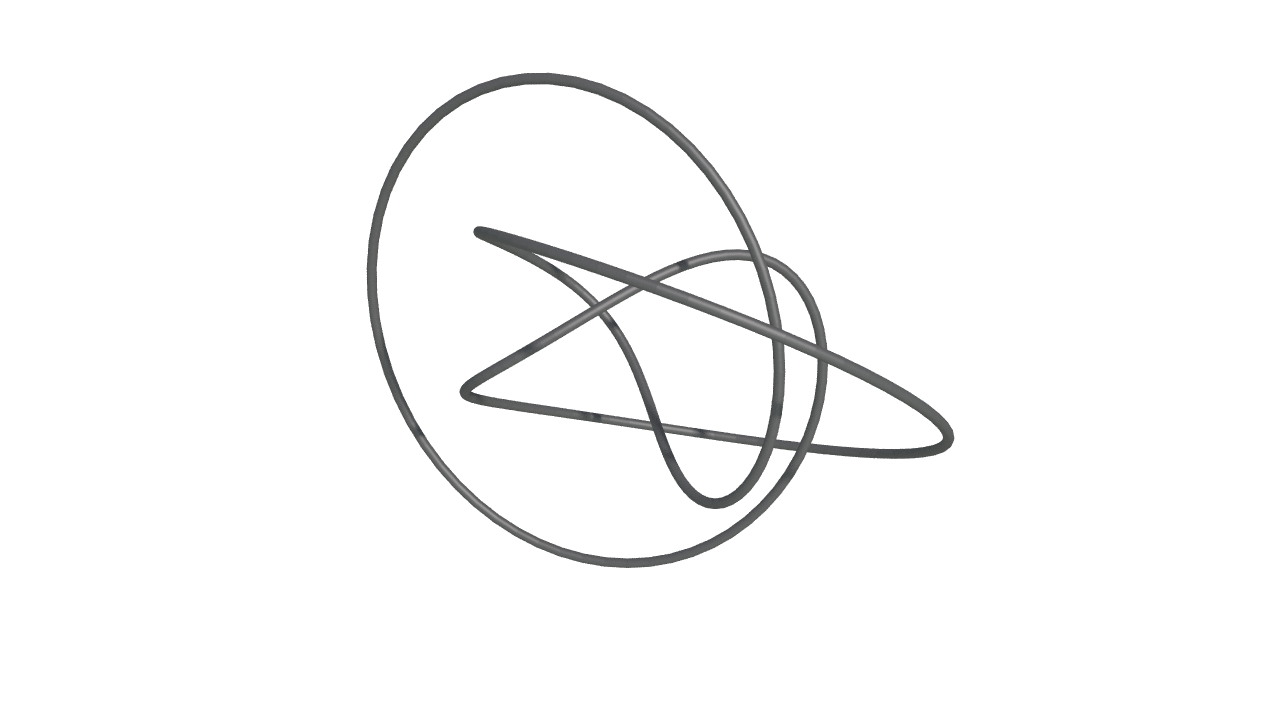}
\end{minipage}
 \begin{minipage}[t]{0.4\textwidth}
\includegraphics[width=\textwidth,angle=0]{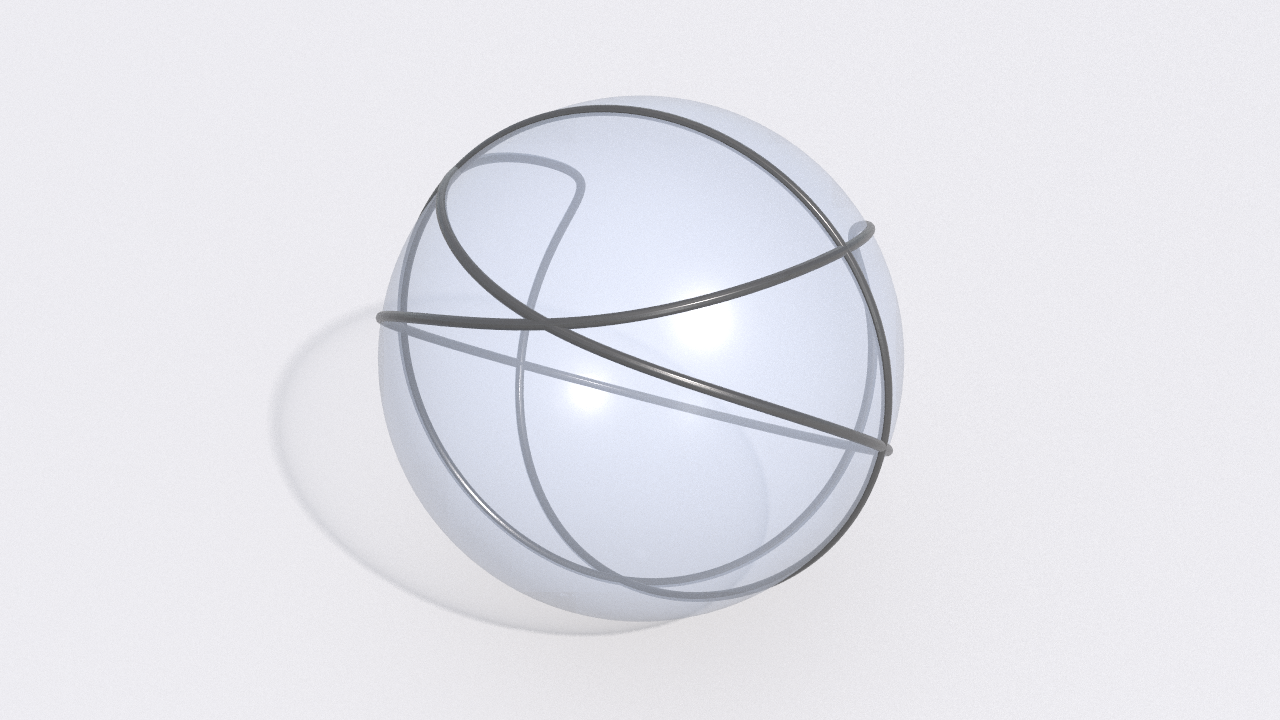}
\end{minipage}
\caption{ Figure-eight knot (left) and its unit velocity map (right). }
\label{fig:figure-eight}
\end{figure}

\begin{figure}
\centering
 \begin{minipage}[t]{0.4\textwidth}
\includegraphics[width=\textwidth]{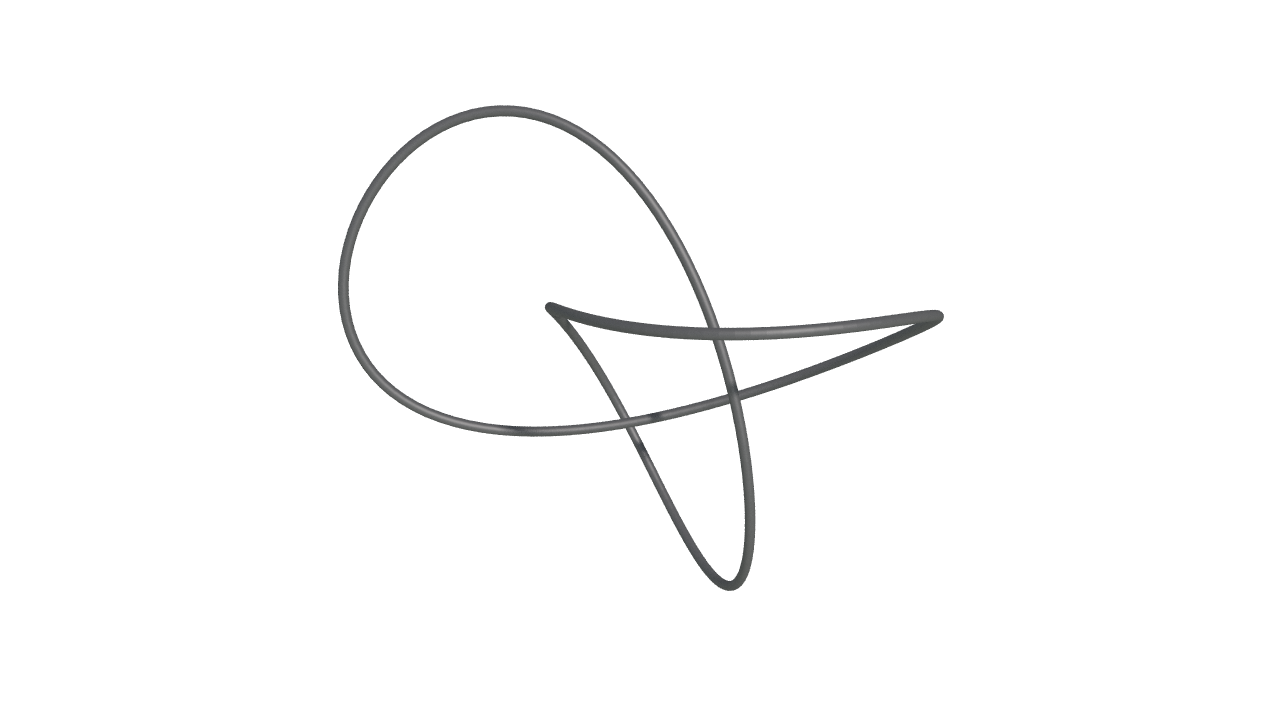}
\end{minipage}
 \begin{minipage}[t]{0.4\textwidth}
\includegraphics[width=\textwidth,angle=0]{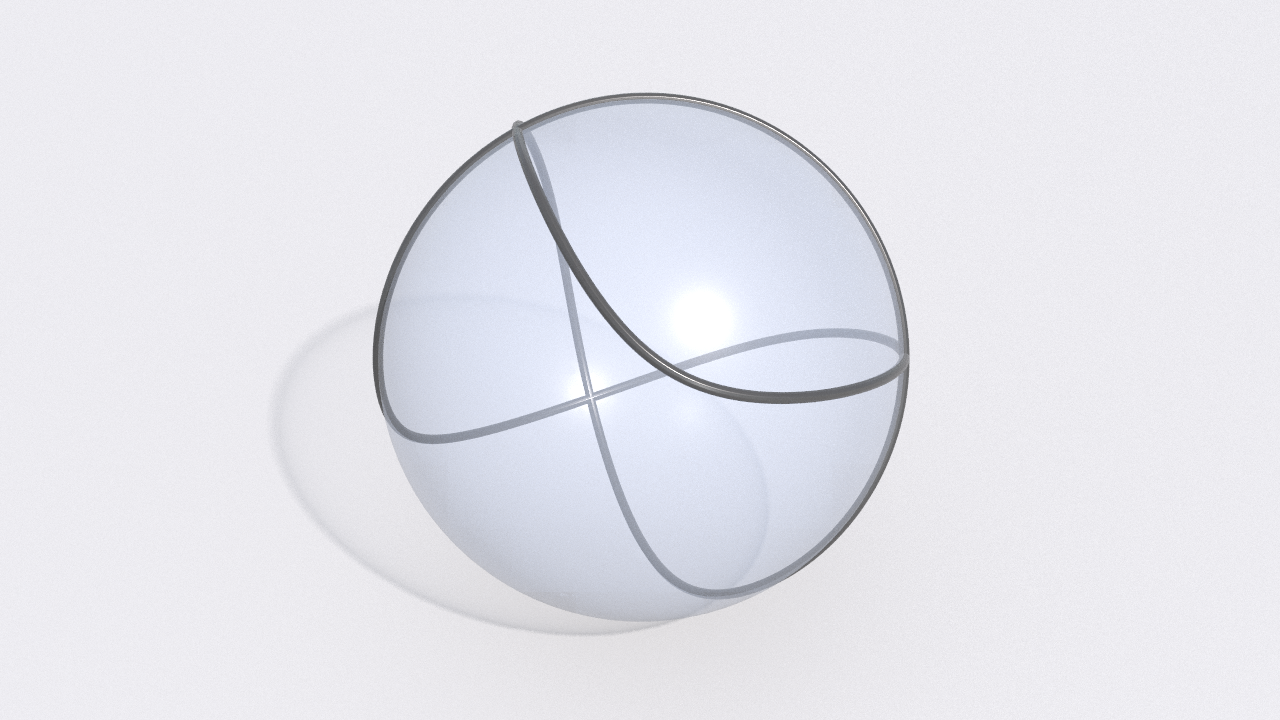}
\end{minipage}
\caption{Trefoil (left) and its unit velocity map (right). }
\label{fig:trefoil}
\end{figure}

\subsection{Total torsion of a non-Frenet curve}
\label{sec:NonFrenet}

In this example we compute the total torsion of a regular closed space curve with a singular (infinitely oscillatory) Frenet--Serret frame. Note that the mod-\(2\pi\) total torsion \eqref{eq:def_total_torsion} and the writhe only require the curve to be regular \ie, \(\partial_t\gamma(t)\neq 0\).  In particular, the curve does not need to possess a regular Frenet torsion.
A smooth regular curve without a regular Frenet--Serret frame is called a non-Frenet curve (See \mbox{\cite[Ch.\@ 1 Sec.\@ 5.6]{PinkallGross2024geometry}} for a detailed discussion). Consider the following example of a smooth non-Frenet closed space curve
\begin{align*}
    \gamma(t')=\frac{1}{e^{-2 t'^2}+t'^2}\left(e^{-t'^2} \cos \left(e^{t'}\right), e^{-t'^2} \sin \left(e^{t'}\right), t'\right), 
\end{align*}
where \(t'\in [-\infty,\infty)\) is a reparametrization of \(t\in [0,2\pi)\) by \(t'=\tan\left( \frac{t-\pi}{2}\right)\).
The spherical curve \(\gamma'\) traced out by the unit velocity displays an exponential spiral about \(t=\pi\) with an infinite turning number and an unbounded geodesic curvature (\autoref{fig:spiral_frenet}).
Note that the total Frenet torsion of \(\gamma\) is the total turning angle of the spherical curve \(\gamma'\), which is divergent.  Despite the divergence of the total Frenet torsion, the mod-$2\pi$ torsion is well-defined since the area enclosed by \(\gamma'\) is bounded.

\begin{figure}[h]
 \begin{minipage}[h]{0.32\textwidth}
\includegraphics[width=\textwidth]{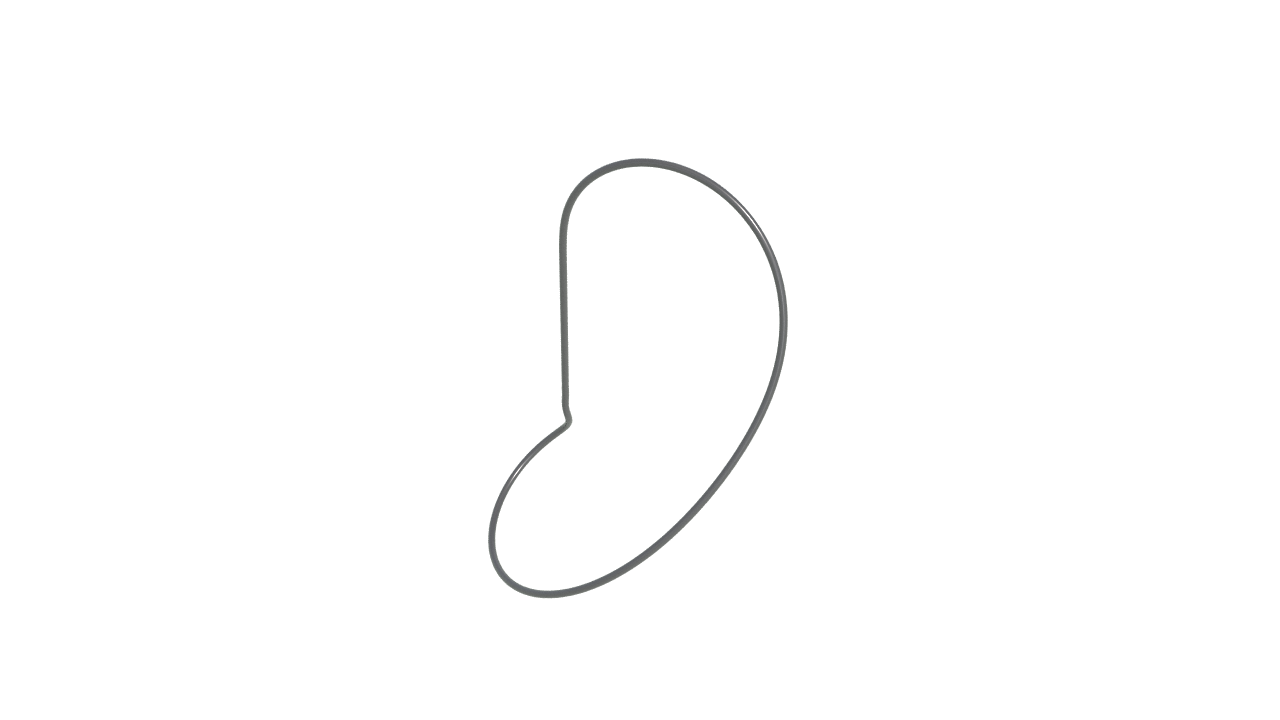}
\end{minipage}
 \begin{minipage}[h]{0.32\textwidth}
\includegraphics[width=\textwidth,angle=0]{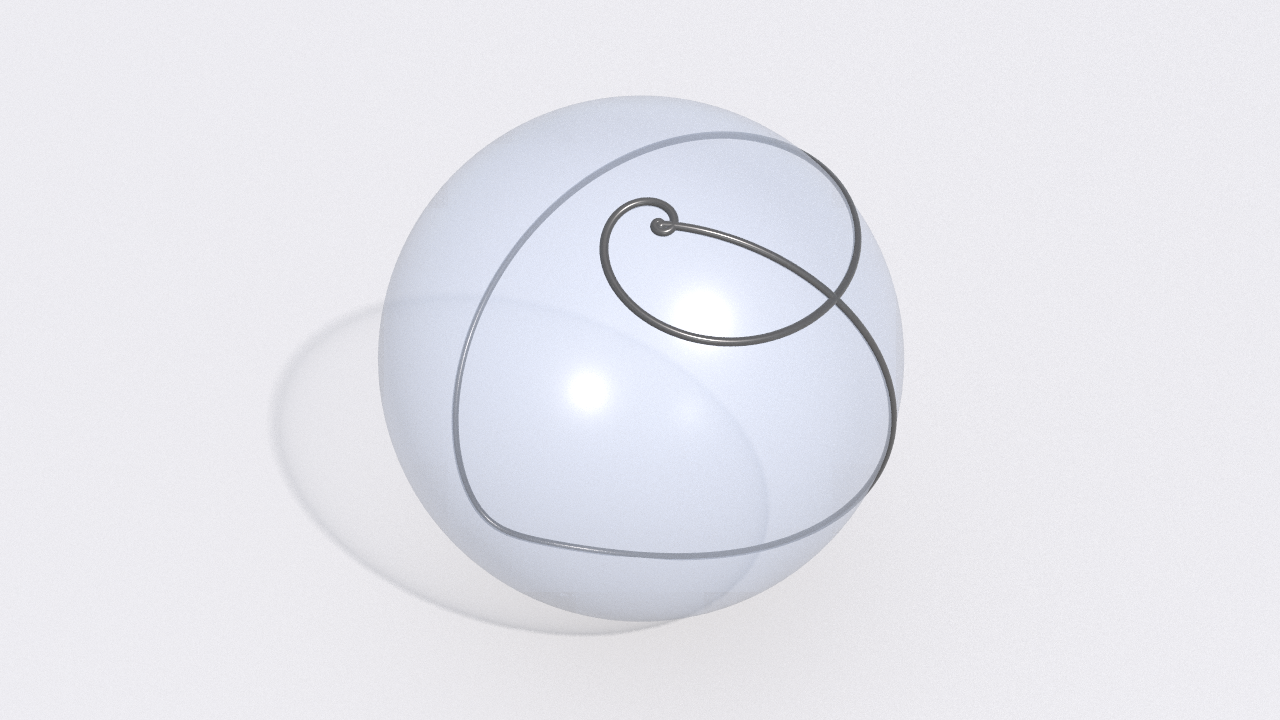}
\end{minipage}
 \begin{minipage}[h]{0.32\textwidth}
\includegraphics[width=\textwidth,angle=0]{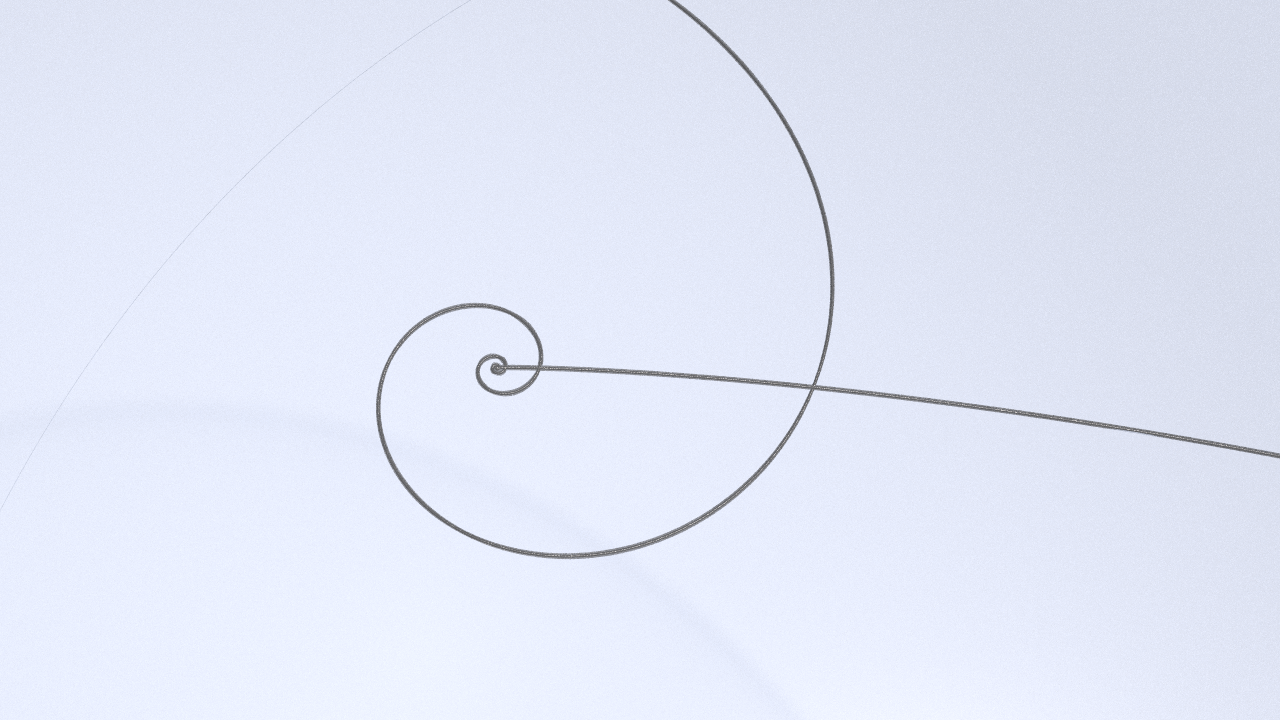}
\end{minipage}
\caption{ A curve with a spiral Frenet-Serre frame (left), its unit velocity map (middle), and the close-up view of the spiral (right).}
\label{fig:spiral_frenet}
\end{figure}

In this example, it is crucial to avoid the classical angle--based formula (\ref{eq:GB_formula} \ref{eq:exterior_angle1}) for evaluating \(\Area(\gamma')\) due to the divergent turning angle in \(\gamma'\).  In fact, evaluating the total torsion using the classical formula is equivalent to sampling and summing the Frenet torsion (exterior angle of the spherical polygon \(\gamma'\)).  The process produces a result that diverges as the number of sample points \(n\to\infty\) (\figref{fig:plot_cardioid_Frenet_spiral}, right).
In contrast, our formula \eqref{eq:FinalAreaFormula} is able to robustly evaluate the total torsion of this non-Frenet curve. 

\subsection{Area of a region on the earth}
\color{black}
\begin{wrapfigure}{hr}{0.3\textwidth}
  \centering
    \includegraphics[width=0.3\textwidth]{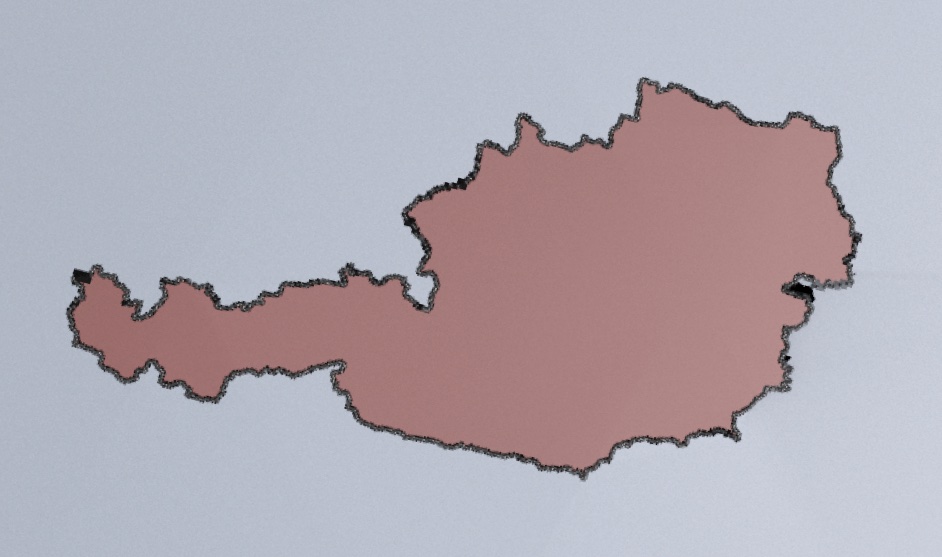}
  \caption{Austria plotted on the sphere.}\label{fig:austria}
\end{wrapfigure}
The next example applies to geography.
We compute an approximate area of Austria using the data from the Database of Global Administrative Areas (GADM) \cite{areas2012gadm}.
The data contains a sequence of latitudes and longitudes (\figref{fig:austria}) forming a spherical polygon. We treat the earth as a round sphere while acknowledging that we neglect its slight ellipsoidal figure and terrains. 

 The solid angle value we computed via our formula was $2.06206\times 10^{-3}$ on the unit sphere. By multiplying the square of the arithmetic mean radius $R\coloneqq (2R_E + R_P)/3\approx 6,371 \operatorname{km}$ \cite{Moritz1980geodetic}, where $R_E, R_P$ are the equatorial and polar radii, we obtain $83,882 \operatorname{km}^2$, which is a descent approximation of the official area $83,871 \operatorname{km}^2$ with  $0.013\%$ relative error. 

\subsection{Solid angle fields and Seifert surfaces}
Our last example demonstrates the construction of the solid angle fields of given space curves (Section~\ref{sec:Introduction}).  
We consider three rectangular loops linked into the topological configuration of Borromean rings (\figref{fig:solidangle}).
Let \(\tilde\gamma\colon\bigsqcup^3\SS^1\to\RR^3\) denote this triplet of space polygons. 
For each point \(\bx\in\RR^3\setminus\tilde\gamma\), we let \(\Omega(\bx)\) be half the solid angle subtended by \(\tilde\gamma\) at \(\bx\).  Explicitly, consider the spherical curves \(\gamma^\bx\colon
{\textstyle\bigsqcup^3\SS^1}\to\SS^2\) given by projecting \(\tilde\gamma\) on the unit sphere centered at \(\bx\): 
\begin{align}
    \gamma^\bx(s) \coloneqq \frac{\tilde \gamma(s)-\bx}{|\tilde \gamma(s)-\bx|},\quad s\in {\textstyle\bigsqcup^3\SS^1}.
\end{align}
The solid angle field \(\Omega\colon\RR^3\setminus\tilde\gamma\to\RR/(2\pi\ZZ)\) is defined by
\begin{align}
    \Omega(\bx)\coloneqq\frac{1}{2}\Area(\gamma^\bx).
\end{align}
Note that the projected spherical curve \(\gamma^\bx\) is degenerate for \(\bx\) on any extended tangent line of \(\tilde\gamma\).  Despite this unavoidable degeneracy, our formula robustly handle the solid angle computation for all \(\bx\) (\figref{fig:solidangle}, left). By extracting a levelset of the solid angle field, we construct a smooth Seifert surface (\figref{fig:solidangle}, right).

\begin{figure}
\centering
 \begin{minipage}[htbp]{0.48\textwidth}
\includegraphics[width=\textwidth, trim = {100, 0, 100, 0}, clip] {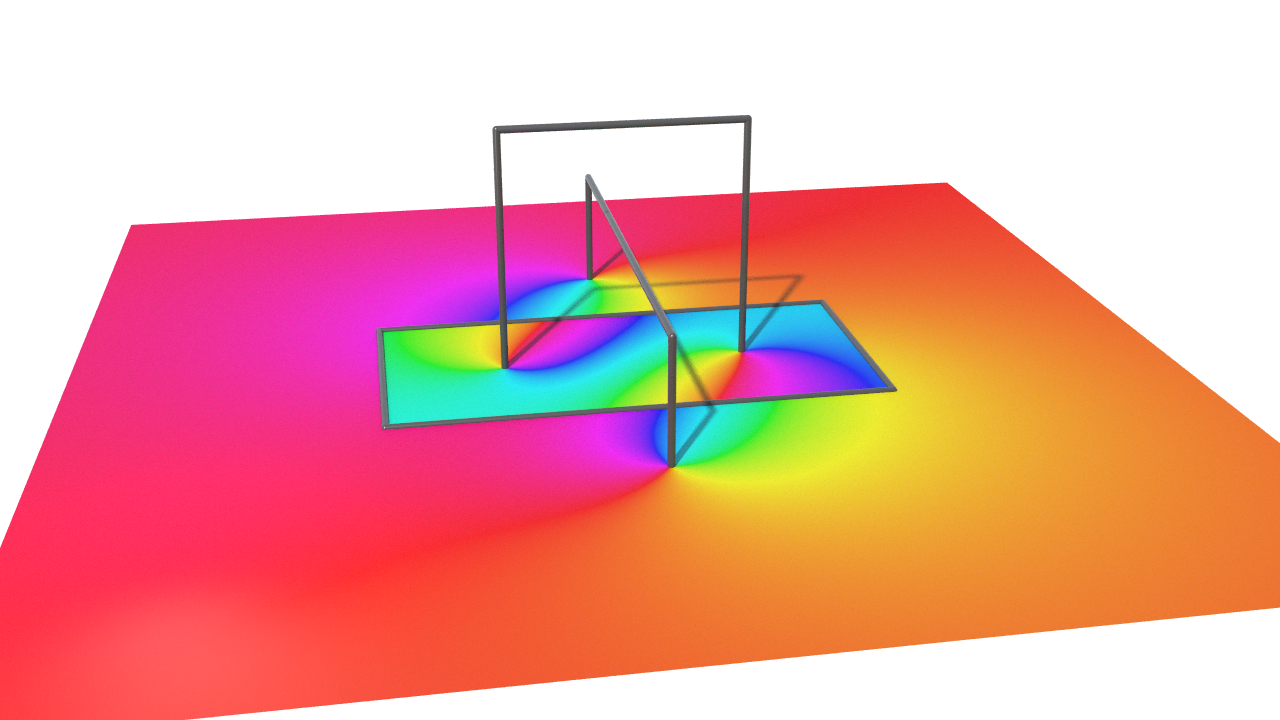}
\end{minipage}
 \begin{minipage}[htbp]{0.48\textwidth}
\includegraphics[width=\textwidth, trim = {100, 0, 100, 0}, clip] {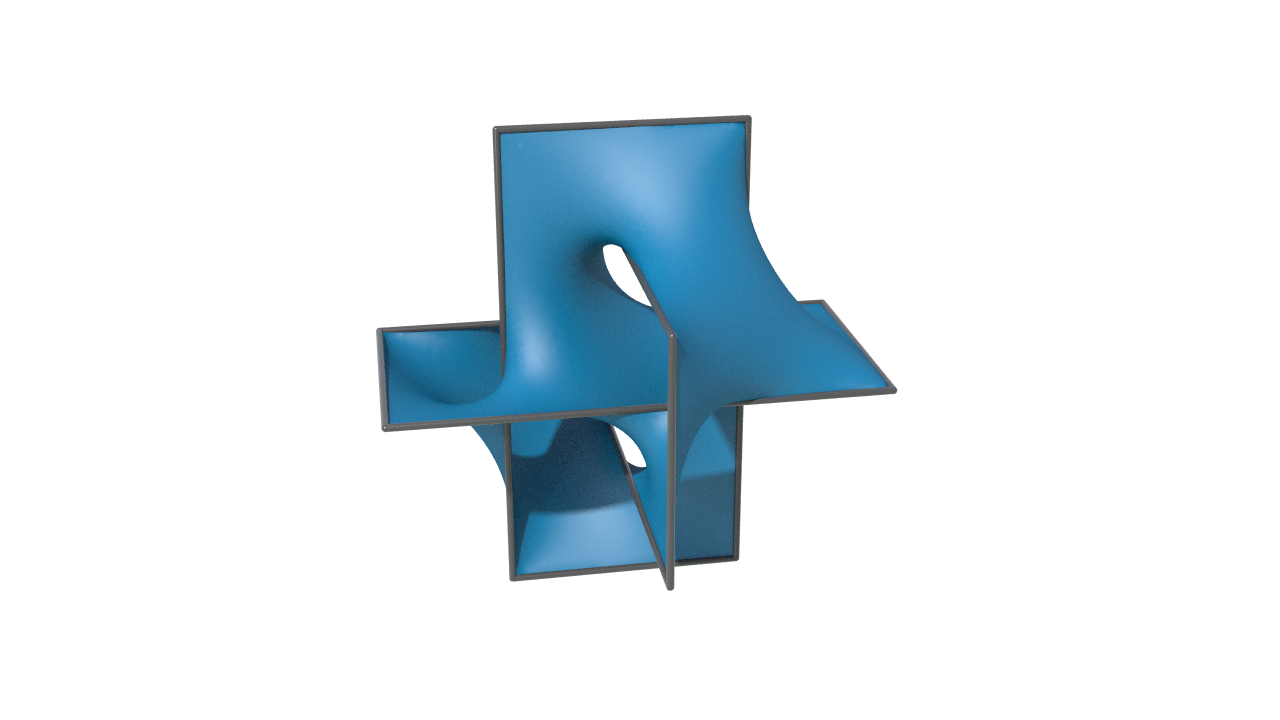}
\end{minipage}
\caption{The solid angle field of the Borromean rings visualized on the $z=0$ plane (left) where each color corresponds to a value in $\SS^1$. A levelset of the solid angle field in $\RR^3$ is a Seifert surface (right).}
\label{fig:solidangle}
\end{figure}

\section*{Concluding remark and outlook}
In this paper, we derived area formulae using Green's theorem on prequantum bundles \(\SS^3\) and \(\SO(3)\) over \(\SS^2\). 
These formulae avoid relying on angle calculation, unlike the classical formula  that fails on degenerate cases.
As the prequantum version of Green's theorem is available for any compact symplectic manifold (Remrak \ref{rmk:existence_prequantum_bundle}), one may investigate area formulae or integral of symplectic form of polygons in other manifolds.

For example, in quantum information; the complex projective space \(\CC\PP^{2^n-1}\)
is regarded as the space of possible states of \(n\)-qubits \cite{bengtsson_zyczkowski_2006} and a closed path is a periodic orbit. We hope that finding an explicit expression of its enclosed area (geometric phase) may lead to practical applications in quantum computation.

\appendix

\section{Prequantum bundles \(\SS^3\) and \(\SO(3)\) over \(\SS^2\) }\label{appendix:proof_of_prequantum_bundles}

\begin{proof}[Proof of  \propref{prop:HopfPrequantum}]
In terms of the quaternion coordinate, the area form of \(\SS^2\) is written as
\begin{align*}
    \sigma = -{1\over 2}\Re( p\Im(dp\wedge dp)).
\end{align*}
On the other hand,
    the differential of \(\pi = q\ii\conj q\) is
    \(d\pi = dq\ii\conj q + q\ii d\conj q\).
    Hence, the pullback area form is computed as
    \begin{align*}
        \pi^*\sigma &= -{1\over 2}\Re(\pi\Im(d\pi\wedge d\pi))
        = -{1\over 2}\Re(q\ii\conj q\Im(d\pi\wedge d\pi)) \\
        &= -{1\over 2}\Re(\ii\conj q\Im(d\pi\wedge d\pi)q) = -{1\over 2}\Re(\ii \Im(\conj qd\pi\wedge d\pi q))
        \\
        &=-{1\over 2}\Re\left(
        \ii\Im((\conj qdq\ii\conj q+\ii d\conj q)\wedge (dq\ii + q\ii d\conj qq))
        \right)\\
        & =-{1\over 2}\Re(\ii\conj q dq\ii\wedge\conj qdq\ii +\ii dq\wedge d\conj q - \ii d\conj q \wedge dq -d\conj q q\ii\wedge d\conj q q),
    \end{align*}
    which agrees with
    \begin{align*}
        d\alpha =
        -2\Re(dq\wedge \ii d\conj q).
    \end{align*}
    Here, we have applied   \(d\conj q q = d|q|^2-\conj qdq = -\conj qdq\), which holds on \(\SS^3\) where \(|q|^2 = 1\).
\end{proof}

 \begin{proof}[Proof of  \propref{prop:SO3Prequantum}]
Let \(V,W \in \mathfrak{so}(3)\), and \(\omega=(\omega_1,\omega_2,\omega_3)\), \(\nu=(\nu_1,\nu_2,\nu_3)\) be their coefficients with respect to the standard basis of \(\mathfrak{so}(3)\) as in \secref{sec:derivation_classical_formula}. For \(PV, PW \in T_P \SO(3)\) on each \(P\in \SO(3)\), we have,
\begin{align*}
    d\eta |_P(PV,PW)=-\eta |_P([PV,PW])=-\eta |_I ([V,W])=\nu_2 \omega_3 - \omega_2 \nu_3,
\end{align*}
due to the left-invariance of the vector fields \(PV, PW\) under \(\SO(3)\). Here \([\cdot,\cdot]\) denotes the Lie bracket.  Now we compute \(\pi_2 ^* \sigma\). Let us write \(W, V\) and \(P\) column-wise as \(W=(w_1 w_2 w_3), V=(v_1 v_2 v_3)\), and \(P=(p_1 p_2 p_3)\). We have,
\begin{align*}
    \pi_2^* \sigma|_P(PV,PW)
    &=\sigma|_{p_1} (d\pi_2 (PV), d\pi_2 (PW))
    =\sigma|_{p_1} (P v_1, Pw_1 )\\
    &= \sigma|_{\ii} (v_1, w_1)
    = dy \wedge dz (v_1, w_1)
    = \nu_2\omega_3 -  \omega_2 \nu_3 ,
\end{align*}
where we used the invariance of \(\sigma\) under \(\SO(3)\) and the expression of \(\sigma\) using the Cartesian coordinates. 
    \end{proof}

\section*{Acknowledgments}
The authors acknowledge anonymous referees for their reviews and insightful suggestions, and Chris Wojtan for his continuous support through discussions. The second author thanks Anna Sisak for a fruitful discussion on prequantum bundles. 
\bibliographystyle{siamplain}
\bibliography{references}

\end{document}